\newtheorem{theorem}{Theorem}[section]
\newtheorem{definition}[theorem]{Definition}
\newtheorem{proposition}[theorem]{Proposition}
\newtheorem{corollary}[theorem]{Corollary}
\newtheorem{lemma}[theorem]{Lemma}
\newtheorem{assumption}[theorem]{Assumption}
\newtheorem{notation}[theorem]{Notation}
\newtheorem{remark}[theorem]{Remark}
\newtheorem{example}[theorem]{Example}
\newtheorem{examples}[theorem]{Examples}
\newtheorem{foo}[theorem]{Remarks}
\newtheorem{Pre}[theorem]{}
\newcommand{\Ea}{{\mathbb E}}
\newcommand{\Prob}{{\mathbb P}}
\newcommand{\R}{{\mathbb R}}
\newcommand{\N}{{\mathbb N}}
\newcommand{\Acal}{{\mathcal A}}
\newcommand{\Ccal}{{\mathcal C}}
\newcommand{\Dcal}{{\mathcal D}}
\newcommand{\Ecal}{{\mathcal E}}
\newcommand{\Fcal}{{\mathcal F}}
\newcommand{\Gcal}{{\mathcal G}}
\newcommand{\Hcal}{{\mathcal H}}
\newcommand{\Lcal}{{\mathscr L}}
\newcommand{\Ncal}{{\mathcal N}}
\newcommand{\Scal}{{\mathcal S}}
\newcommand{\Ucal}{{\mathcal U}}
\newcommand{\Fact}{{\cl^{\infty}(\mathcal{F})}}
\newcommand{\FactI}{{\cl^{\infty}(\mathcal{F}_{t_1})}}
\renewcommand\inf{\qopname\relax m{\vphantom{p}inf}}
\newcommand{\cf}{\mathcal{F}}
\newcommand{\cn}{\mathcal{N}}
\newcommand{\cl}{\mathscr{L}}
\newcommand{\norm}[1]{\left\Vert#1\right\Vert}
\newcommand{\ob}{\mathbf{1}}
\begin{document}

\title{Stochastic Dynamic Utilities and  Intertemporal Preferences}

\author{Marco Maggis\thanks{Email: \texttt{marco.maggis@unimi.it}. The author would like to thank Marco Frittelli and Fabio Maccheroni for inspiring discussions on this subject.} \and Andrea Maran\thanks{Email: \texttt{andre.maran95@gmail.com}.} 
}

\date{\today}

\maketitle

\begin{abstract}
We propose an axiomatic approach which economically underpins the
representation of dynamic intertemporal decisions in terms of a stochastic dynamic
utility function, sensitive to the information available to the decision maker. Our construction is iterative and based on
time dependent preference connections, whose characterization is
inspired by the original intuition given by Debreu's State
Dependent Utilities (1960).

\end{abstract}

\noindent \textbf{Keywords}: intertemporal decisions, stochastic
dynamic utility, conditional preferences, sure thing principle.



\section{Introduction}

The \emph{criterium} which leads the decisions of every agent,
intervenes in many aspects of real life, determining the
economical, political and financial dynamics. For this reason the psychological analysis and the mathematical axiomatization of the agents' behavior has gained a lot of interest, leading to a flourish
stream of research literature (see \cite{Ma15} for an exhaustive review). The first key element which comes into play in the decision process is the Subjective Probability, which has been intensively studied since the preliminary contributions by de Finetti \cite{DF31}. Von Neumann and Morgenstern \cite{vNM47} initiated the work on preferences over lotteries, which adimit a representation in terms of an expected utility. This intuition dates back to a paper published in 1738 (see \cite{Be54}), where Bernoulli 
already realized that any decision is heavily linked to the \textquotedblleft particular circumstances of the person making the estimate\textquotedblright, 
which could vary significantly depending on the observed evolution of information. 
For example a fund manager may start behaving in a risk seeking manner under the stress provoked by a plunge of the financial markets, which is causing severe losses.
\\Debreu \cite{De60} gave an axiomatic setup (on a finite state space) to model preference relations which can depend on the future state of nature and can be represented by the so-called state-dependent utility functions (see Theorem \ref{Debreu60}, in the
Appendix). State dependent preferences are sensitive to the random outcomes that may occur in the future and therefore the agent subjective utility may be affected by different future scenarios related to the occurrence of specific events. Karni \cite{Ka83} developed measures of risk aversion which allows the partial ordering of state dependend utilities in view of optimal risk sharing analysis. In \cite{WZ99}, Wakker and Zank provided an extension of Debreu's
result from finite to infinite dimension, for the special case of real-valued outcomes and monotonic preferences. The development of their extended functional, additively decomposable on
infinite-dimensional spaces, leads to a numerical representation
of the preferences in terms of a state dependent utility $u$
and a probability $\Prob$ (see Theorem \ref{formaintegrale},
Appendix). The main results in \cite{WZ99} (and \cite{CL06}) will indeed play a key role in the proofs of the results contained in the present paper.
\\ In \cite{KP78} Kreps and Porteus gave rise to a new axiomatic treatment of the temporal resolution of uncertainty. They consider a discrete time model $t=0,\ldots,T$  where an individual must choose an action $d_t$ constrained to the state $x_t$ occurred at time $t$.  
As a random event takes place determining an immediate payoff $z_t$, the action $d_t$ will affect the probability distribution of $(z_t,x_{t+1})$ where $x_{t+1}$ is the new state of the world. The result is a dynamic choice behaviour which cannot be represented by a single cardinal utility.
\\ Epstein Zin \cite{EZ89} and Duffie Epstein \cite{DE92} (see also \cite{EW94}) constructed a class of recursive preferences over intertemporal consumption lotteries respectively in discrete and continuous time models. In \cite{EZ89} the recursive utility at time $t$ is given by an aggregating function i.e. $V_t(c)=W(c_t, m_{t}(V_{t+1}))$ where $c_t$ is the consuption and $m_{t}(V_{t+1})$ the certainty equivalent at time $t$ of $V_{t+1}$. Similarly Duffie and Epstein \cite{DE92} obtained a representation of the recursive utility on consumption streams of the form 
\begin{equation*}
	V_t(c) = \Ea_{\Prob}\left[ \int_t^T\left( f(c_s,V_s(c)) + \frac{1}{2}A(V_s(c)) |\sigma_s|^2 \right) ds \big| \Fcal_t \right],
\end{equation*}  
where $f$ is an aggregator, $A$ is the variance multiplier and $\sigma$ is a volatility process.
In such context the system of \emph{conditional} preferences between two consumptions 
is determined by the recursive utility as follow:
\begin{equation*}
	c \succeq_{\omega,t} c' \iff V_t(c,\omega) \geq V_t(c',\omega).
\end{equation*}
In \cite{EL93}, Epstein and LeBreton  showed that the existence of a Bayesian prior is implied by preferences based on beliefs which admit a dynamically consistent updating in response to new information. The effect of consequences by the mean of conditional preferences over acts is introduced by Skiadas in \cite{Sk97}. Given an event $F$, the preference relation $x\succeq^F y$  \textquotedblleft has the
 interpretation that, ex ante, the decision maker regards the consequences of act
 $x$ on event $F$ no less desirable than the consequences of act $y$ on the same
 event \textquotedblright(\cite{Sk97} pp. 350). Wang \cite{Wa03} axiomatized three updating rules for a class of conditional preferences over consumption-information profiles.
 A systematic study of conditional preferences is provided in \cite{DJ14}: a \emph{conditional preference order} is a binary relation $\succeq$ which is reflexive, transitive and locally complete. An opportune extension to the conditional setup of the indipendence and Archimedean axioms, led in \cite[Theorem 5.2]{DJ14} to the representation of conditional preferences over the set of lotteries in terms of a conditional utility function. 
\\ Recurvive multiple-priors and dynamic variational preferences (see resp. \cite{ES03} and \cite{MMR06}) deals with conditional preference relations $\succeq_{t,\omega}$ on consumption streams $h$. Here $t\in \{0,1,2,\ldots,T\} $ is a point in time and $\omega$ is the path of the state space observed up to time $t$. Recursive multiple-priors utility and dynamic variation preferences can be represented respectively in the form of conditional functionals  
\begin{eqnarray}\label{consumption1}
V_t(h) & = & \inf_{P\in\Delta} \big(E_{P}\big[\sum_{\tau\geq t}\beta^{\tau-t}u(h_{\tau})\mid \Fcal_t\big]\big)
\\ V_t(h) & = & \inf_{P\in\Delta} \big(E_{P}\big[\sum_{\tau\geq t}\beta^{\tau-t}u(h_{\tau})\mid \Fcal_t\big]+c_{t}(p|\Fcal_t)\big),\label{consumption2} 
\end{eqnarray}
where $c_t$ is the recursive ambigity index, which under some restrictions gurantees time consistency of the preferences (See \cite[p.14, Axiom 4]{MMR06}). In both papers \cite{ES03,MMR06} the Dynamic Consistency Axiom plays a fundamental role and inspired the result contained in Proposition \ref{timecon}, Section \ref{ITP:main} of the present paper.       
\\ Finally we observe that in the recent paper \cite{RTV18}, Riedel et al. consider dynamic preferences $\succeq_{t,s}$ on couples $(P,f)$ where $P$ belongs to a set of probabilities and $f$ is an act (imprecise probabilistic framework). An important feature is that Dynamic Consistency of the preferences guarantees that the set of conditional priors is stable under pasting.

\paragraph{From Economics to Finance: the dynamics of decision making.} 
The interplay between Decion Theory and Financial Mathematics had its outbreak after the important contribution given by Merton in \cite{Me71} and is witnessed by the flourish literature on stochastic optimal control (see \cite{Ph09} for a detailed exposition). 
\\ The classical utility maximization problem can be formulated as a stochastic control problem of the form 
$$v(t,X)=\sup_{\alpha\in \Acal(t,X)} E_{\Prob}[u(V_T(t,X,\alpha))\mid \Fcal_t],$$
where the $\sup$ is to be intended as a $\Prob$ essential supremum, $\Acal(t,X)$ is the set of admissible strategies (starting at time $t$), $u$ is a concave utility function and $V_T(t,X,\alpha)$ is the $\Fcal_T$-measurable final payoff of the strategy $\alpha$ with initial random endowment $X$ (which is $\Fcal_t$-measurable). We may question when an agent, acting as a utility optimizer, is  willing to invest in a strategy $\alpha$ from time $t$ to time $T$, provided she owns at $t$ the random amount $X$. The answer to this question is deeply related to the intertemporal comparison between $X$ and the final value of the strategy $\alpha$ given by $V_T(t,X,\alpha)$. One rational solution could be that the agent enters the dynamic investment only if she believes to hold an optimal solution. Namely we can define an intertemporal relation $\succeq_{t,T}$  by 
\begin{eqnarray}\label{Merton}
X \succeq_{t,T}  V_T(t,X,\alpha) & \text{ if and only if } & v(t,X)\geq E_{\Prob}[u(V_T(t,X,\alpha))\mid \Fcal_t] \quad \Prob\text{-a.s.}.
\end{eqnarray}
The Dynamic Programming Principle (\cite[Theorem 3.3.1]{Ph09}) implies that for any bounded random variable $X$, $v(t,X)\geq E_{\Prob}[u(V_T(t,X,\alpha))|\Fcal_t]$ and equality holds 
whenever $\alpha^*$ is the optimal policy.
In this case $X$ is the intertemporal equivalent of $V_T(t,X,\alpha^*)$ which will be named in the following section of this paper Conditional Certainty Equivalent.  In this example, $v$ represents the indirect utility and the preference relation $\succeq_{t,T}$ is not a standard binary relation and its properties need to be introduced carefully as we shall do in an abstract fashion in Section \ref{ITP:main}.

This classical backward approach to
utility maximization has recently been argued in a series of paper
by Musiela and Zariphopoulou starting from \cite{MZ06,MZ09} and a novel
forward theory has been proposed: the utility function is
stochastic, time dependent and moves forwardly. In this theory, the
forward performance (which replaces the indirect utility of the
classic case) is built through the underlying financial market and
must satisfy some appropriate martingale conditions. Inspired by
this idea, Frittelli and Maggis \cite{FM11} studied the
conditional (dynamic) version of certainty
equivalent (as defined in \cite{Pratt}). The preliminary object is
a stochastic dynamic utility $u(t,x,\omega)$ - i.e. a stochastic
field - representing the evolution of the preferences of the
agent. The novelty in \cite{FM11} is that the (backward)
conditional certainty equivalent, represents the time-$s$-value of
the time-$t$-claim $X$, for $0\leq s\leq t<\infty $, capturing in
this way the intertemporal nature of preferences. Unfortunately
any axiomatization of intertemporal preferences, which could justify
the representation in terms of stochastic dynamic utilities, is
still missing in the literature and our aim is to fill
this gap.

\paragraph{The aim of this paper.} 
Indeed people are highly impatient when comparing present and future
outcomes and both emotion-based and cognitive-based mechanisms
contribute to intertemporal distortions. In \cite{ZU16},
Zauberman and Urminsky provide an overview of the psychological
determinants of intertemporal choice such as impulsivity, goal
completion and reward timing, different evaluation of the future
in terms of concreteness, time perception and many other features:

\begin{center}
\textquoteleft\textquoteleft \emph{In sum, these findings
establish that the way people perceive future time itself is an
important factor in how they form their intertemporal preferences
[...]
\\ What is common across the various factors influencing
intertemporal preferences is that all these mechanisms influence
the relative attractiveness of achieving a present goal compared
to a later more distant one.} \textquoteright \textquoteright (see
\cite{ZU16}, p. 139)
\end{center}

In this paper we aim at characterizing a family of intertemporal
preference relations which compare random payoffs whose realizations will be known at different points in time. 
We will introduce a set of conditional axioms which will lead to the representation of preference in terms of a Stochastic Dynamic Utility $u(t,x,\omega)$ and a Subjective Probability $\Prob$ on a general state space $\Omega$ (see Theorem \ref{main:theorem}), which can be rephrased as:
conditional to the available information, $g$ is prefered at time $s$ to $f$ at time $t$ if and only if 
$$u(s,g)\geq E_{\Prob}[u(t,f)\mid\Fcal_s] \quad \Prob-a.s.$$
The Stochastic Dynamic Utility turns out to be a random field adapted to a given filtration which represents the information flow. For this reason $u(t,x,\omega)$ randomly reacts whenever the
Decision Maker becomes aware of new sensitive data, such as market behavior, news, catastrophic shocks or any other macro/micro factor which leads to a reconsideration of personal beliefs. 
Since different random payoffs are defined on different instants in time, 
the notion of preference relation\footnote{We point out that the use of the term \textquoteleft preference\textquoteright is slightly improper as the ordering will not be a binary relation as it is usually intended.} we are going to introduce will satisfy non-standard axioms and will take the name of Intertemporal Preferences (ITP). 

\medskip

The main novelty of our approach is that we provide an abstract axiomatization of Intertemporal Preferences which allows to include in our model \textquotedblleft the relative attractiveness of achieving a present goal compared to a later more distant one \textquotedblright. Indeed our iterative construction leads to an automatic forward updating of preferences depending on the avaliable information, which satisfies a form of dynamic consinstency.
As a byproduct we obtain a theoretical framework where the theory of Forward Performances \cite{AZZ18,MZ06,MZ09} and the study of Conditional Certainty Equivalent \cite{FM11} can be embedded.    

\medskip

The key ingredients of ITP can be
summarized by four elements: first the information at each time
is described by the existence of a filtration $\{\mathcal{F}_t
\}_{t \in [0,+\infty)}$, i.e. a family of sigma algebras such that
$\Fcal_s\subseteq \Fcal_t$ for $s\leq t$. Second, as ITP compares
random payoffs which live at different times, we shall need to
introduce a relation $\preceq_{s,t}$ (resp. $\succeq_{s,t}
$) for $s<t$ being two points in time. In particular
$g\preceq_{s,t} f$ will mean that the $\Fcal_t$-measurable payoff
$f$ (which will be fully revealed at time $t$) is preferred to the
$\Fcal_s$-measurable $g$, conditioned to the knowledge of the
information available at time $s$ (Similar for $g\succeq_{s,t}
f$). Third the preference relation $\preceq_{s,t}$ is not total if
the full information $\Fcal_s$ is not yet disclosed. The notion
of conditional preferences as introduced in \cite{DJ14} becomes
therefore an important tool to understand the nature of ITP.
Finally we will assume that the agent observes real information 
only through a discretisation of the time line, namely $t_0 = 0
<t_1 < ... < t_n <\ldots$. We observe that in \cite{DJ14} a
probability on the conditional sigma algebra was assumed to exist
a priori. In our approach this requirement is not necessary, but
we rather derive step by step a new probability update which follows directly from the decision theory structure we are choosing.

\medskip

The paper is structured as follows: in Section \ref{ITP:examples} we provide a description of the notations used in the paper and a toy example to motivate our study; Section \ref{ITP:main} is devoted to the description
of the set of axioms characterizing ITP and to the statement of the main
representation result. In Section \ref{unconditioned:updating} we
prove the result in the unconditioned case (i.e. for trivial
initial information). The aim of Section
\ref{unconditioned:updating} is twofold: on the one hand it will serve
as initial step of the induction argument we present in Section
\ref{inductive:proof} to obtain the complete proof of our main
Theorem \ref{main:theorem}. On the other hand, it is written in a
self-contained manner, so that it can be read and understood
independently from the general conditional setting.


\section{Preliminaries on Intertemporal preferences.}
\label{ITP:examples}

\subsection{Notations}\label{notation}
Throughout the paper we shall make use of the notations described in this short section. 
We fix a measure space $(\Omega,\Fcal)$ where $\Omega$
is the set of all possible events (\emph{state space}) and $\Fcal$ is a sigma
algebra. We shall model information
over time by the existence of an arbitrary filtration
$\{\mathcal{F}_t \}_{t \in [0,+\infty)}$, with $\Fcal_s\subseteq \Fcal_t\subseteq
\Fcal$ for every $s\leq t$. For any given sigma algebra $\Gcal\subseteq
\Fcal$ we denote by $\cl(\Gcal)$ the space of
$\Gcal$-measurable functions taking values in $\R$ (\emph{outcome space}). We shall usually refer to elements $f\in \cl(\Gcal)$ as random variables (or acts) and denote by $\cl^{\infty}(\Gcal)$ its
subspace collecting bounded elements i.e. $f\in \cl(\Gcal)$ such that $|f(\omega)|\leq k$ for any $\omega\in\Omega$ and some $k\geq 0$. On $\cl(\Gcal)$ and
$\cl^{\infty}(\Gcal)$ we shall consider the usual
pointwise order $f\leq g$ if and only if $f(\omega)\leq g(\omega)$
for every $\omega\in \Omega$ and similarly $f<g$ if and only if
$f(\omega)< g(\omega)$ for every $\omega\in \Omega$.  Given two
elements $f,g\in \cl^{\infty}(\Gcal)$ we use the notation
$f\vee g$, $f\wedge g$ to indicate respectively the minimum and
the maximum between $f$ and $g$. For a countable family of acts
$\{f_n\}_{n\in\N}\subseteq \cl^{\infty}(\Gcal)$ we
consider the $\inf_n f_n, \sup_n f_n $ the pointwise
infimum/supremum of the family and recall that if the family is
uniformly bounded then $\inf_n f_n, \sup_n f_n $ are elements of
$\cl^{\infty}(\Gcal)$. $\cl^{\infty}(\Gcal)$
endowed with the sup norm $\|\cdot\|_{\infty}$ becomes a Banach
lattice, where $\|f \|_{\infty}= \sup_{\omega\in
\Omega}|f(\omega)|$. By $\mathbf{1}_A$, $A\in\Gcal$ we indicate
the element of $\cl^{\infty}(\Gcal)$ such that
$\mathbf{1}_A(\omega)=1$ if $\omega\in A$ and $0$ otherwise. For
$f\in \cl^{\infty}(\Gcal)$ and $A\in \Gcal$,
$f\mathbf{1}_{A}$ denotes the restriction of $f$ to $A$; for any
couple $f,g\in \cl(\Gcal)$ and event $A\in \Gcal$, $f\mathbf{1}_{A} + g\mathbf{1}_{A^c}$
denotes the random variable that agrees with $f$ on $A$ and with $g$ on $A^c$.
Let $\Gcal_1\subset \Gcal_2$ be two sigma algebras. For a finite partition $\{A_1, ... , A_n \}\subset \Gcal_2$ of
$\Omega$ and $\{g_j\}_{j=1}^n\subset \Lcal^{\infty}(\Gcal_1)$,
$\sum_{j=1}^n g_j \mathbf{1}_{A_j}$ denotes the element assigning
$g_j$ on $A_j$, $\forall j = 1, ... , n$. This type of random variables can be interpreted as 
simple act conditional to $\Gcal_1$ and 
$\mathcal{S}_{\Gcal_1}(\Gcal_2)$ denotes the space conditional simple acts. The standard notion of simple acts can be
obtained when $\Gcal_1=\{\emptyset, \Omega\}$ and the
corresponding space will be denoted by $\mathcal{S}(\Gcal_2)$.

Whenever a probability $\Prob$ is given $(\Omega,\Fcal,\Prob)$
becomes a measure space and as usual we shall say that a
probability $\widetilde{\Prob}$ is dominated by $\Prob$
($\widetilde{\Prob}\ll \Prob$) if $\Prob(A)=0$ implies
$\widetilde{\Prob}(A)=0$ for $A\in \Fcal$. Similarly a probability
$\widetilde{\Prob}$ is equivalent to $\Prob$
($\widetilde{\Prob}\sim \Prob$) if $\Prob\ll\widetilde{\Prob}$ and
$\widetilde{\Prob}\ll \Prob$. A property holds $\Prob$ almost
surely ($\Prob$-a.s.), if the set  where it fails has $0$
probability.
\\For any given sigma algebra
$\Gcal\subseteq \Fcal$ we shall denote with $L^{0}(\Omega
,\mathcal{G},\mathbb{P})$ the space of equivalence classes of
$\mathcal{G}$ measurable random variables that are $\mathbb{P}$
almost surely equal and by $L^{\infty}(\Omega
,\mathcal{G},\mathbb{P})$ the subspace of ($\Prob$ a.s.) bounded
random variables. Formally any $f\in \cl(\Gcal)$ will be a representative of the class $X:=[f]_{\Prob}\in L^{0}(\Omega,\Gcal,\Prob)$. Moreover the essential ($\mathbb{P}$ a.s.)
\emph{supremum} of an arbitrary family of random variables
$\{X_{\lambda}\}_{\lambda\in\Lambda}\subseteq L^{0}(\Omega
,\mathcal{G},\mathbb{P})$ will be simply denoted by $\Prob-\sup
\{X_{\lambda }\mid \lambda\in\Lambda\}$, and similarly for the
essential \emph{infimum} (see \cite{FS04} Section A.5 for
reference). 
\\Let us fix $(\Omega,\Gcal,\Prob)$: given  a random field $\phi:\Omega\times \R\to \R$ such that for every $f\in \cl^{\infty}(\Gcal)$ the map $\omega\mapsto \phi(\omega,f(\omega))$ is $\Gcal$-measurable (see \cite{Rock} for further details) we introduce the notation
\begin{equation}\label{range}
L(\Gcal;\phi) = \{[\phi(\cdot,f(\cdot))]_{\Prob}\mid f\in \cl^{\infty}(\Gcal)\}.   
\end{equation}
Indeed $L(\Gcal;\phi)$ represents the range of the random field $\phi$ in the space $L^{0}(\Omega,\mathcal{G},\mathbb{P})$.
In order to tackle the issue of continuty of the conditional representation of preferences,
we need to introduce an \emph{ad hoc} definition of continuity for stochastic fields. 
Consider $(\Omega,\Gcal,\Prob)$ and $\phi:\Omega\times \R\to \R$ as for \eqref{range}, we say that $\phi$ is
$\star$-continuous if $\forall f \in \Lcal^\infty(\Gcal)$ it holds that $f(\omega)$ 
belongs to the points of continuity of $\phi(\cdot, \omega)$ for $\Prob$-a.e. $\omega \in \Omega$ (see Definition \ref{*-continuity} in Appendix \ref{Appendix A} for the formal statement).
\\ Finally the space of $\Prob$ integrable random
variables will be denoted by $L^1(\Omega
,\mathcal{G},\mathbb{P})$. We use the standard notation and
indicate by $E_{\Prob}[X]$ the Lebesgue integral of $X\in
L^1(\Omega ,\mathcal{G},\mathbb{P})$. Moreover if $\Hcal$ is a
sigma algebra contained in $\Gcal$ then $E_{\Prob}[X\mid \Hcal]$
denotes the conditional expectation of $X$ given $\Hcal$ and
$\Prob_{|\Hcal}$ the restriction of the probability $\Prob$ on the
smaller sigma algebra $\Hcal$.

\subsection{State dependent utility and the role of information: a toy example} \label{example}
Two brothers $E,Y$ are inheriting from
their old and rich grandmother. The elder brother $E$ is asked to
choose between receiving $1$ million Euros immediately (at time
$t=0$), or waiting two years (time $t_2$) when his grandmother
will move to the rest home in Sardinia and earn her wonderful
villa near the Como Lake. Alternatively $E$ could wait until the
intermediate time $t_1$ to make up his decision, but in any case
the younger brother $Y$ will have to accept what is left from $E$
after his decision is taken.
\\ The value of the villa at time $0$ is equal to
$1$ million, but of course it makes little sense to compare the
two values today since the villa will be available only at $t_2$.
\\ Now assume that at time $t_1$ election for the new Italian Government
will take place and the catastrophic event of Italy leaving the
European Union (with a consequent default of its economic system)
may occur. Call this event $A$ and set $\Fcal_{t_1}=\{\emptyset,
\Omega, A,A^c\}$. Brother $E$ knows that if $A^c$ will occurs the
value of the villa will increase to $1.11\cdot 10^6$, but in case
of default it will fall down to $2\cdot 10^5$. The probability of
the default event $A$ is low but not negligible, say
$\Prob(A)=0.01$. Finally the probability of defaulting at time
$t_2$ (call this event $D$) knowing that $A^c$ occurred is almost
negligible, for instance $\Prob(D\mid A^c)=10^{-6}$ (in which case
the villa would be worth again $2\cdot 10^5$). In case that a
default did not occur neither at time $t_1$ nor at time $t_2$ then
the value of the villa at $t_2$ would jump up to $1.8\cdot 10^6$.
Information at time $t_2$ is therefore described by $\Fcal_{t_2}$
the sigma algebra generated by $\{A,D\}$.

Agent $E$ is assumed to be risk neutral as far as Italy is not
defaulting i.e. $u(x)=x$. In case of a default (either at time
$t_1$ or $t_2$) his utility function would be
$\tilde{u}(x)=\frac{1}{2}x$ if $x\geq 0$ or $\tilde{u}(x)= 2x$ if
$x< 0$ . The naive idea is that once the default has occurred the
agent gives more importance in avoiding losses, rather than gaining
money. We can synthesize this reasoning by introducing the
stochastic dynamic utility as follows

$$u(t,x,\omega) = \left\{ \begin{array}{cc} u(x) & \text{ if } t=0
\\ \tilde{u}(x)\mathbf{1}_{A}(\omega)+u(x) \mathbf{1}_{A^c} (\omega) & \text{ if } t=1
\\ \tilde{u}(x)\mathbf{1}_{A\cup D} (\omega) +u(x) \mathbf{1}_{A^c\cap D^c} (\omega) & \text{ if } t=2
\end{array}\right.$$

We make the following considerations.

\begin{itemize}

\item If agent $E$ compares the choice between getting $10^6$
today or the villa at time $t_2$, then he is comparing the utility
$u_0(10^6)=10^6$ with respect to the expected utility of the
payoff at time $t_2$ given by
$$\text{Expected payoff } = 1.8\cdot 10^6 \cdot (1-10^{-2}-10^{-6})+ \frac{1}{2} \cdot 2 \cdot 10^5 \cdot (10^{-2}+10^{-6}).$$
This Expected payoff is strictly greater than $10^6$ and indeed if
$E$ neglects the intermediate time $t_1$ then he will choose for
the villa instead of immediate money. But this impulsive strategy
would not lead to an optimal solution.

\item Assume now that the agent first compare $10^6$  with the
value of the villa at time $t_1$. Then
$$\text{Expected payoff } = 1.11 \cdot 10^6 \cdot 0.9 + \frac{1}{2} \cdot 2 \cdot 10^5 \cdot 0.01= 10^6. $$
This means that the expected value of the villa at time $t_1$ is
the same of the cash amount of money which means that $E$ is
indifferent between taking the decision today ($t=0$) or tomorrow
($t_1$). Therefore he has better waiting until the elections take
place and distinguish between event $A$ or $A^c$. In the former
case $E$ will choose $10^6$ which is in fact better than the value
of the villa. In the second case he will prefer obtaining the
villa at time $t_2$ rather than $10^6$  at time $t_1$. Clearly
this second strategy provides an optimal final profile, since it
exploits the additional intermediate information.
\end{itemize}
\begin{remark}\label{null:info}
Notice that the reasoning would change if the elder brother reckons $\Prob(A)=0$. In such a case the additional intermediate information would play no role in the decision process.  
\end{remark}

\section{An axiomatization of intertemporal preferences.}\label{ITP:main} 

We consider a
time interval $[0,+\infty)$, together with a fixed (countable)
family of updating times $t_0 = 0 < t_1 < \ldots < t_n <\ldots,$.
At each $t_i$ the agent shall reconsider her preference relations
depending on the observed information. In particular at time $t_0
= 0$ no information is available, i.e. $\mathcal{F}_0 = \{
\emptyset, \Omega\}$. Information at each time $t$ is represented
by a sigma algebra $\Fcal_{t}$ and since information increases in time we shall have $\Fcal_{s}\subseteq \Fcal_{t}$ for every $s\leq t$.
\\In the entire paper acts are intended as real valued random variables, matching the framework used in \cite{WZ99}\footnote{Indeed this choice 
is not \textquoteleft without loss of generality\textquoteright. Nevertheless as explained in the Introduction this research is inspired by 
potential financial applications and therefore we prefer to choose a more financial friendly setup.}.

\begin{assumption}\label{overall:assumption} We shall always assume throughout the paper that the agent is endowed by some initial utility function $u_0 : \mathbb{R} \to
\mathbb{R}$ which is strictly increasing and continuous (not necessarily
concave). For simplicity we will consider the case $u_0(0)=0$.
\end{assumption}

The paper could be developed without fixing $u_0$ as in Assumption \ref{overall:assumption}. The advantage of this choice is twofold: on the one hand fixing a single $u_0$ gives a sharper uniqueness result in Theorem \ref{main:theorem} (see also Remark \ref{remark:u0}). On the other hand $u_0$ plays the role of \textquotedblleft initial value \textquotedblright, which follows from the idea that $u_0$ is inherited from the attitude towards decisions shown by the agent in the past. The shape of $u_0$ is effective: for instance it allows to understand if at the initial time the agent is risk averse or risk seeking and in general how she evaluates variation in the amount of money she owns (quoting  \cite{Be54} \textquotedblleft Thus there is no doubt that a gain of  one thousand ducats is more significant to a pauper  than to a rich man though both gain the same  amount.\textquotedblright). 

\medskip

The time $t_1$ represents the first instant when the Decion Maker
observes available information which will potentially influence
her decision. Random payoffs at time $t_1$ are described by random variables in $\cl^{\infty}(\mathcal{F}_{t_1})$ and the
agent compares these random payoffs with initial sure positions
represented by elements in $\R$. In Section
\ref{unconditioned:updating} we shall provide the representation
of an intertemporal preference $\succeq_{0,1}$  connecting the initial time
$t_0=0$ to $t_1$. In Proposition \ref{main:0} we will show the following: if $\succeq_{0,1}$ is complete, transitive, monotone, continuous and satisfies the Sure-Thing Priciple then 
for any $f\in \cl^{\infty}(\mathcal{F}_{t_1})$ and $a\in\R$
we have $a\succeq_{0,1} f$ is and only if $u_0(a)\geq \int_{\Omega}u_1(f(\omega),\omega)d\Prob_1(\omega)$. 
This representation is based on Theorem \ref{formaintegrale} by Wakker and Zank and shows how new inputs will affect the attitude of an agent towards decisions, generating a new utility $u_1$ which will
depend on the state of nature realized. Once time $t_1$ is reached
the Decison Maker will start considering a new aim in the next future, say
$t_2$, and compare random payoffs, known at time $t_1$, with those
which will depend on events occurring at $t_2$. From the
$t_0$ perspective the new intertemporal preference $\preceq_{1,2}$
will be a conditional preference relation which incorporates the
further knowledge reached at time $t_1$. Therefore we shall follow the idea
proposed by \cite{DJ14} and make use of similar techniques
developed in the conditional setting. This procedure will repeat iteratively at every interval from $t_i$ to $t_{i+1}$ and for this reason our main
result will be proved by induction over updating times. Each
updating step from $t_i$ to $t_{i+1}$ will be characterized by a
preference interconnection $\preceq_{i,i+1}$ (or
$\succeq_{i,i+1}$) satisfying conditional transition axioms. Of
course since the proof proceeds by induction we will assume that
we reached the desired representation up to step $t_i$ and show
the representation at the succeeding time $t_{i+1}$. This will
guarantee the existence of a probability $\Prob_i$ only on the
sigma algebra $\Fcal_{t_i}$, which we will need to update to the
larger sigma algebra $\Fcal_{t_{i+1}}$, following the Bayesian
paradigm.

\bigskip

For the statement of Theorem \ref{main:theorem}, we fix an
arbitrary $N$ and a family of intertemporal preference relations
$\succeq_{i,i+1}$ for $i=0,\ldots,N-1$ with the following meaning:
for any $g \in \cl^{\infty}(\mathcal{F}_{t_i})$ and $f \in
\cl^{\infty}(\mathcal{F}_{t_{i+1}})$ we say that $g
\succeq_{i,i+1} f$ if the agent prefers to hold the gamble $g$ at
time $t_i$ than the gamble $f$ at time $t_{i+1}$, knowing all the
information provided at time $t_i$ (similarly for $g
\preceq_{i,i+1} f$).
\\ As usual we say that $g \in \cl^{\infty}(\mathcal{F}_{t_i})$
is equivalent to $f \in
\cl^{\infty}(\mathcal{F}_{t_{i+1}})$, namely $g
\sim_{i,i+1} f$, if both $g \succeq_{i,i+1} f$ and $g
\preceq_{i,i+1} f$, and define the family of \emph{null events} for
every $i=1,\ldots, N$ as
\begin{equation}\label{null:events}\mathcal{N}(\mathcal{F}_{t_{i}}) = \{ A \in
\mathcal{F}_{t_{i}} : g \sim_{i-1,i} f \Rightarrow g \sim_{i-1,i}
\tilde g \mathbf{1}_A + f \mathbf{1}_{A^c}, \forall f, \in
\cl^{\infty}(\mathcal{F}_{t_{i}}), g,\tilde g \in
\cl^{\infty}(\mathcal{F}_{t_{i-1}}) \}.
\end{equation}
An event $A\in \Fcal_{t_i}$ is called \emph{essential} at time
$t_i$ if $A\in \mathcal{F}_{t_i}\setminus
\mathcal{N}(\mathcal{F}_{t_{i}})$

\paragraph{The Transition Axiom.} We are now ready to introduce the first axiom characterizing the Intertemporal Preferences. In this context the preference ordering $\succeq_{i,i+1}$ is not anymore 
a binary relation as it is generally understood. For this reason we shall need a reformulation of the axioms which shall be compared to more classical ones. Moreover we work in a conditional setting, which means that 
the relation $\succeq_{i,i+1}$ is assessed taking into account information available at time $t_i$. Information are modelled by measurable sets $A\in\Fcal_{t_i}$ and in addition the Decision Maker has a subjective belief concerning sets which are relevant ($A\in \mathcal{F}_{t_i}\setminus \mathcal{N}(\mathcal{F}_{t_{i}})$) and those which are irrelevant ($A\in \mathcal{N}(\mathcal{F}_{t_{i}})$). To understand the central role of null sets we refer to the example contained in Section \ref{example} (see in particular Remark \ref{null:info}).

\begin{description}\item[(T.i)] Transition Axiom for the couple $\preceq_{i,i+1}$, $\succeq_{i,i+1}$. Let $A,B\in \Fcal_{t_i}$, $g \in
\cl^{\infty}(\mathcal{F}_{t_i})$ and $f \in
\cl^{\infty}(\mathcal{F}_{t_{i+1}})$ then we require for
$\preceq_{i,i+1}$, $\succeq_{i,i+1}$ to be

\begin{enumerate}

\item[1.] locally complete: there exists $A \in
\mathcal{F}_{t_i}\setminus \mathcal{N}(\mathcal{F}_{t_{i}})$ such
that either $g \mathbf{1}_A \succeq_{i,i+1} f \mathbf{1}_A$ or
$g\mathbf{1}_A \preceq_{i,i+1} f \mathbf{1}_A$.

\item[2.] transitive: if $g \succeq_{i,i+1} f$ and $h
\preceq_{i,i+1}f$ then $\{g<h\}\in
\mathcal{N}(\mathcal{F}_{t_{i}})$;

\item[3.] normalized: if $A,B\in \mathcal{N}(\mathcal{F}_{t_{i}})$
then $\mathbf{1}_{A}\sim_{i,i+1} \mathbf{1}_{B}$.

\item[4.] non-degenerate: for any $f \in
\cl^{\infty}(\mathcal{F}_{t_{i+1}}) $ there exist
$g_1,g_2\in \cl^{\infty}(\mathcal{F}_{t_{i}}) $ such that
$g_1\preceq_{i,i+1} f$ and $g_2\succeq_{i,i+1} f$.

\item[5.] consistent: if $g \mathbf{1}_A \succeq_{i,i+1} f
\mathbf{1}_A$ (resp. $\preceq_{i,i+1}$) and $B \subseteq A$ then
$g \mathbf{1}_B \succeq_{i,i+1} f \mathbf{1}_B$ (resp.
$\preceq_{i,i+1}$);

\item[6.] stable: if $g \mathbf{1}_A \succeq_{i,i+1} f
\mathbf{1}_A$ (resp. $\preceq_{i,i+1}$) and $g \mathbf{1}_B
\succeq_{i,i+1} f \mathbf{1}_B$ (resp. $\preceq_{i,i+1}$) then $g
\mathbf{1}_{A \cup B} \succeq_{i,i+1} f \mathbf{1}_{A \cup B}$
(resp. $\preceq_{i,i+1}$);

\end{enumerate}
\end{description}

\noindent Before giving an explanation of (T.i) in its full generality,
we specialize it to the unconditioned case ($i=0$). 

\begin{description}\item[(T.0)] Transition preference
relation $\preceq_{0,1}$.
\begin{enumerate}

\item[1.] complete: for $a \in \mathbb{R}$ and $f \in
\cl^{\infty}(\mathcal{F}_{t_1})$ either $a \succeq_{0,1}
f$ or $a \preceq_{0,1} f$;

\item[2.] transitive: $a \preceq_{0,1} f$ and $b \succeq_{0,1} f$
implies $a\leq b$;

\item[3.] normalized: $0\sim_{0,1} 0$ (i.e. $0\succeq_{0,1} 0$ and
$0\preceq_{0,1} 0$).

\item[4.] non-degenerate: for any $f \in
\cl^{\infty}(\mathcal{F}_{t_1})$ there exist $y,z\in\R$
such that $y\preceq_{0,1} f$ and $x\succeq_{0,1} f$.
\end{enumerate}
\end{description}

The Axiom (T.0) is composed by four
requirements only and the reason of this significant simplification is due to the assumption $\Fcal_0=\{\emptyset,\Omega\}$. To understand why completeness and transitivity are the natural counterpart suggested by the classical definition of weak order, we observe that Proposition \ref{representation:0} 
guarantees that under (T.0) for any $f \in
\cl^{\infty}(\mathcal{F}_{t_1})$ there exists a unique $C_{0,1}(f)\in \R$ such that both $C_{0,1}(f) \succeq_{0,1}
f$ or $C_{0,1}(f) \preceq_{0,1} f$ hold.
We can therefore consider the following induced ordering $\preceq_1$: for any $f,g \in
\cl^{\infty}(\mathcal{F}_{t_1})$, $f\preceq_1 g$ if and only if $C_{0,1}(f)\leq C_{0,1}(g)$. Indeed $\preceq_1$ is reflexive and inherits completeness and transitivity from $\preceq_{0,1}$.

\medskip

We now move to the interpretation of Axiom (T.i): properties 1, 5 and 6 are deeply related and inspired to the
notion of conditional preference in \cite{DJ14}. The first property of (T.i) points out that the updating procedure necessarily leads to preferences which are complete in a conditional sense.  In particular we shall see in Lemma \ref{treeventi} (which is the counterpart of Lemma 3.2 in \cite{DJ14}) that local completeness allows to
compare two acts on three disjoint $\Fcal_{t_i}$ measurable
events. Consistency and stability can be understood in terms of
information achieved: for example consistency states that if the
agent prefer $g\in \cl^{\infty}(\mathcal{F}_{t_i})$ at
time $t_i$ rather than $f \in
\cl^{\infty}(\mathcal{F}_{t_{i+1}})$ at time $t_{i+1}$
knowing that event $A \in \Fcal_{t_i}$ has occurred, than she
shall prefer $g$ for any condition $B\in \Fcal_{t_i}$, $B\subseteq A$.  
\\ The property 2 in (T.i) is the conditional generalization of its counterpart in (T.0). Normalization (property 3) says that $\Fcal_{t_i}$ null events are preserved in the one step updating.   
In particular the agent is indifferent between random payoffs which differ from $0$ by a negligible $\Fcal_{t_i}$ measurable set (Loosely speaking \textquotedblleft Holding nothing is indifferent throughout time\textquotedblright, up to null events). Non degeneracy (property 4) is the more technical one and guarantees some
simplifications in our arguments, since it implies that any
random payoff $f$ at time $t_{i+1}$ admits an $\Fcal_{t_i}$-measurable $g$  which is more/less preferred (it is nevertheless a very weak requirement which is satisfied in all the cases of interest).

\begin{example}
In the Introduction (pp. 3-4) we proposed the classical framework of utility maximization, which can help understanding the meaning of Axiom (T.i) 1,5,6. 
In fact the preference relation $X \succeq_{t,T} V_T(t,X,\alpha)$ is defined via the inequality between $\Fcal_t$-measurable 
random variables, $v(t,X)\geq E_{\Prob}[u(V_T(t,X,\alpha))\mid \Fcal_t]$, and inherits those properties which characterize 
the conditional expectation (namely Axiom (T.i) 1,5,6, replacing $t_i$ with $t$ and $t_{i+1}$ with $T$).     
\end{example}

The definition of Conditional Certainty Equivalent is the basis of
our representation results and  follows from the idea in
\cite{FM11}. In Section \ref{inductive:proof} we shall show by
induction the existence (and uniqueness) of the Conditional
Certainty Equivalent at each time step.

\begin{definition}\label{CCEi} We say $g\sim_{i,i+1} f$ if and only if $g \succeq_{i,i+1} f$
and $g \preceq_{i,i+1} f$. If $g\sim_{i,i+1} f$ then we shall call
$g$ the Conditional Certainty Equivalent (CCE) of $f$ and denote the
family of all CCEs as $C_{i,i+1}(f)$.
\end{definition}

\begin{notation}\label{notation} In what follows we shall use these notations for any $g \in
\cl^{\infty}(\mathcal{F}_{t_i})$ and $f \in
\cl^{\infty}(\mathcal{F}_{t_{i+1}})$:
\begin{itemize}

\item $g \sim_{i,i+1} f$ if both $g \succeq_{i,i+1} f$ and $g
\preceq_{i,i+1} f$ hold;

\item $g \succ_{i,i+1} f$ if $g \succeq_{i,i+1} f$ but $g
\mathbf{1}_A \not\sim_{i,i+1} f \mathbf{1}_A$ $\forall A \in
\mathcal{F}_{t_i} \setminus \mathcal{N}(\mathcal{F}_{t_i})$;

\item $g \prec_{i,i+1} f$ if $g \preceq_{i,i+1} f$ but $g
\mathbf{1}_A \not\sim_{i,i+1} f \mathbf{1}_A$ $\forall A \in
\mathcal{F}_{t_i}\setminus \mathcal{N}(\mathcal{F}_{t_i})$;

\item $g \succ_{i,i+1}^{A} f$ if $g\mathbf{1}_A \succeq_{i,i+1} f
\mathbf{1}_A$ but $g \mathbf{1}_B \not\sim_{i,i+1} f \mathbf{1}_B$
$\forall B \in \mathcal{F}_{t_i} \setminus
\mathcal{N}(\mathcal{F}_{t_i})$ with $B\subseteq A$;

\item $g \prec_{i,i+1}^{A} f$ if $g\mathbf{1}_A \preceq_{i,i+1} f
\mathbf{1}_A$ but $g \mathbf{1}_B \not\sim_{i,i+1} f \mathbf{1}_B$
$\forall B \in \mathcal{F}_{t_i} \setminus
\mathcal{N}(\mathcal{F}_{t_i})$ with $B\subseteq A$.
\end{itemize}
\end{notation}

\begin{remark}\label{regularity} We observe that consistency jointly to stability of $\succeq_{i,i+1}$ (similar for $\preceq_{i,i+1}$)
imply the following pasting properties:
\begin{itemize}
 \item for any $A,B\in\Fcal_{t_i}$, $g_1,g_2\in
 \cl^{\infty}(\mathcal{F}_{t_i})$ and
 $f_1,f_2\in\cl^{\infty}(\mathcal{F}_{t_{i+1}})$.
If $g_1 \mathbf{1}_A \succeq_{i,i+1} f_1
 \mathbf{1}_A$ and  $g_2 \mathbf{1}_{B} \succeq_{i,i+1} f_2
 \mathbf{1}_{B}$ then  
 $(g_1+ g_2)\mathbf{1}_{A \cap B} \succeq_{i,i+1} (f_1 +f_2)\mathbf{1}_{A\cap B}$, 
 $g_1\mathbf{1}_{A \setminus B} \succeq_{i,i+1} f_1 \mathbf{1}_{A\setminus B}$ and 
 $g_2\mathbf{1}_{B \setminus A} \succeq_{i,i+1} f_2\mathbf{1}_{B\setminus A}$.

\item for a family $\{A_n\}_{n\in\N}\subseteq \Fcal_{t_i}$ of disjoint events and
$A=\cup_n A_n$ we have
$$ g\mathbf{1}_A \preceq_{i,i+1} f\mathbf{1}_A \;\Leftrightarrow\; g\mathbf{1}_{A_n} \preceq_{i,i+1} f\mathbf{1}_{A_n} \text{ for every } n.$$
\end{itemize}
\end{remark}

Axiom (T.i) is the key ingredient to obtain the updating construction we are aiming at. In particular assume that at time $t_i$ the agent is characterized by a couple $(\Prob_i,u_i)$ where $\Prob_i$ is the subjective probability on measurable events $\Fcal_{t_i}$ and $u_i$ is a state dependent utility such that $u_i(x,\cdot)$ is $\Fcal_{t_i}$-measurable. 
We shall prove in Proposition \ref{induction:assumption} that if $\succeq_{i,i+1}$ satisfies (T.i) then for any $f\in
\cl^{\infty}(\mathcal{F}_{t_{i+1}})$ there exists a unique Conditional Certainty Equivalent given by
$C_{i,i+1}(f)=u_i^{-1}V_{i+1}(f)$, where
$V_{i+1}(f)  =  \Prob_i-\inf\{u_i(g) \mid g\succeq_{i,i+1} f \}$. Moreover $V_{i+1}$
represents the transition order i.e.
\begin{eqnarray*}
g\preceq_{i,i+1} f & \Leftrightarrow & u_i(g)\leq V_{i+1}(f) \quad
\Prob_i\text{-a.s.}
\\ g\succeq_{i,i+1} f & \Leftrightarrow & u_i(g)\geq V_{i+1}(f)
\quad \Prob_i\text{-a.s.} 
\end{eqnarray*}

\paragraph{Integral representation of Inter Temporal Preferences.} We will take into consideration the following axioms: 
monotonicity, the Sure Thing Principle and a technical continuity,
adapted to this conditional setting, which will lead to a
representation of the ITP in the desired integral form.

\begin{description}\item[(M.i)] Strict Monotonicity.
Given arbitrary $g_1, g_2, g_3, \in
\cl^{\infty}(\mathcal{F}_{t_{i}})$, $f\in \cl^{\infty}(\mathcal{F}_{t_{i+1}})$, $A\in
\Fcal_{t_{i+1}}\setminus \mathcal{N}(\mathcal{F}_{t_{i+1}})$ and
$g_1 < g_2$:
$$g_3 \sim_{i,i+1} g_1 \mathbf{1}_A + f \mathbf{1}_{A^c} \text{
implies } g_3 \prec_{i,i+1}^B g_2 \mathbf{1}_A + f
\mathbf{1}_{A^c} \text{ for some } B\in \Fcal_{t_i}\setminus
\mathcal{N}(\mathcal{F}_{t_{i}}),$$
$$ g_3 \sim_{i,i+1} g_2
\mathbf{1}_A + f \mathbf{1}_{A^c} \text{ implies }  g_3
\succ_{i,i+1}^B g_1 \mathbf{1}_A + f \mathbf{1}_{A^c}  \text{ for
some } B\in \Fcal_{t_i}\setminus
\mathcal{N}(\mathcal{F}_{t_{i}}).$$

\item[(ST.i)] Sure-Thing Principle. Given arbitrary $f_1,f_2,h\in
\mathcal{S}_{\mathcal{F}_{t_{i}}}(\mathcal{F}_{t_{i+1}})$, $A\in
\Fcal_{t_{i+1}}\setminus\mathcal{N}(\mathcal{F}_{t_{i+1}})$ and
$g_1\in \cl^{\infty}(\mathcal{F}_{t_{i}})$, such that $g_1
\succeq_{i,i+1} f_1\mathbf{1}_A+h \mathbf{1}_{A^c} $ and $g_1
\preceq_{i,i+1} f_2\mathbf{1}_A+h \mathbf{1}_{A^c}$: for any $k\in
\mathcal{S}_{\mathcal{F}_{t_{i}}}(\mathcal{F}_{t_{i+1}})$ there
exists $g_2\in \cl^{\infty}(\mathcal{F}_{t_{i}})$ such
that $g_2 \succeq_{i,i+1} f_1\mathbf{1}_A+k \mathbf{1}_{A^c} $ and
$g_2 \preceq_{i,i+1} f_2\mathbf{1}_A+k \mathbf{1}_{A^c}$.

\item[(C.i)] Pointwise continuity. Consider any uniformly bounded
sequence $\{f_n\}\subseteq
\cl^{\infty}(\mathcal{F}_{t_{i+1}})$, such that
$f_n(\omega)\rightarrow f(\omega)$ for any $\omega\in\Omega$, then
for any $g\prec_{i,i+1}f$ (resp. $g\succ_{i,i+1}f$ ) there exists
a partition $\{A_k\}_{k=1}^{\infty}\subset \Fcal_{t_i}$ such that
for any $k$ we have
$g\mathbf{1}_{A_k}\preceq_{i,i+1}f_{n}\mathbf{1}_{A_k}$ (resp.
$g\mathbf{1}_{A_k}\succeq_{i,i+1}f_{n}\mathbf{1}_{A_k}$ ) for all
$n\geq n_k$.

\end{description}


We are now ready to state the main contribution of this paper:
Theorem \ref{main:theorem} provides the representation of ITP in
terms of a unique probability $\Prob$ and a stochastic field
$u(t,x,\omega)$, which describes the random fluctuations of
preferences. These were exactly the elements exploited in \cite{FM11} to determine the dynamics of the Conditional
Certainty Equivalent.

\begin{theorem}[Representation]\label{main:theorem} Let Assumption
\ref{overall:assumption} holds and any $\Fcal_{t_i}$ contains
three essential disjoint events for every $i=1,2,\dots$. The
intertemporal preference $\succeq_{i,i+1}$ satisfies (T.i),
(M.i), (ST.i) and (C.i) for any $i=0,\dots, N$ if and only if
there exist a probability $\Prob$ on $\Fcal_{t_N}$ and a
Stochastic Dynamic Utility

\begin{eqnarray}\label{SDU}u(t,x,\omega)=\sum_{i=0}^{N-1}
u_i(x,\omega)\mathbf{1}_{[t_i,t_{i+1})}(t)+u_N(x,\omega)\mathbf{1}_{t_N}(t)
\end{eqnarray}
satisfying

\begin{enumerate}

\item[(a)] $u(t_i,x,\cdot)$ is $\Fcal_{t_i}$-measurable and  $E_{\Prob}[|u(t_i,x,\cdot)|]<\infty$, for all
$x\in\R$;

\item[(b)] $u(t_i,\cdot,\omega)$ is strictly increasing in $x$ and
$u(t_i,0,\omega)=0$\footnote{This additional requirement is in
fact without loss of generality, and allows a useful
simplification in the main body of the proof.}, for all $\omega\in
\Omega$;

\item[(c)] $u(t_i,\cdot,\cdot)$ is $\star$-continuous 

\item[(d)] $E_{\mathbb{P}}[u_{i+1}(f)| \mathcal{F}_{t_i}]\in L(\Fcal_{t_i}; u_i)$\footnote{See the definition in Equation \eqref{range}} for any $f\in \cl^{\infty}(\Fcal_{t_{i+1}})$, $g\in \cl^{\infty}(\Fcal_{t_i})$ and
\begin{eqnarray*}g \succeq_{i,i+1} f & \iff & u(t_i, g) \ge
E_{\mathbb{P}}[u(t_{i+1},f)| \mathcal{F}_{t_i}] \quad
\mathbb{P}\text{-a.s.}
\\ g \preceq_{i,i+1} f & \iff & u(t_i, g) \le
E_{\mathbb{P}}[u(t_{i+1},f)| \mathcal{F}_{t_i}] \quad
\mathbb{P}\text{-a.s.}
\end{eqnarray*}
\end{enumerate}
\emph{Relative uniqueness:} the couple $(\Prob,u)$ can be replaced
by $(\mathbb{P}^*,u^*)$ if and only if $\mathbb{P}$ is equivalent
to $\mathbb{P}^*$ on $\Fcal_{t_N}$ and for any $i=1,\dots,N$ we
have $\Prob(u^*(t_i,\cdot,\cdot) = \delta_i u_{i})=1$, where
$\delta_i$ is the Radon-Nikodym derivative of
$\mathbb{P}_{|\Fcal_{t_i}}$ with respect to
$\mathbb{P}^*_{|\Fcal_{t_i}}$.
\end{theorem}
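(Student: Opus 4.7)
The strategy is induction on $i$, with the sufficiency direction being routine from strict monotonicity of $u(t_i,\cdot,\omega)$ and the tower property of conditional expectation. The substance is the necessity direction. The base case $i=0$ is already Proposition \ref{main:0}: Assumption \ref{overall:assumption} fixes $u_0$, and (T.0)--(C.0) together with Wakker--Zank (Theorem \ref{formaintegrale}) produce $(\Prob_1,u_1)$ satisfying $a \succeq_{0,1} f \iff u_0(a) \geq E_{\Prob_1}[u_1(f)]$.

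For the inductive step, assume the representation has been established up to time $t_i$, yielding $\Prob_i$ on $\Fcal_{t_i}$ and utilities $u_0,\ldots,u_i$ with the stated properties. I would first use only (T.i) to show existence and $\Prob_i$-a.s.\ uniqueness of the Conditional Certainty Equivalent $C_{i,i+1}(f) \in \cl^{\infty}(\Fcal_{t_i})$ for every $f \in \cl^{\infty}(\Fcal_{t_{i+1}})$: non-degeneracy sandwiches $f$ between $\Fcal_{t_i}$-measurable bounds, local completeness delivers a witness on some essential event, and consistency/stability (with the pasting of Remark \ref{regularity}) glue local witnesses into a globally-defined CCE; transitivity together with the induced strict ordering forces uniqueness up to $\Fcal_{t_i}$-null events. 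Define then $V_{i+1}(f) := u_i(C_{i,i+1}(f))$, so that the inductive hypothesis transports the preference into $g \succeq_{i,i+1} f \iff u_i(g) \geq V_{i+1}(f)$ $\Prob_i$-a.s.

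Next I would invoke (M.i), (ST.i), (C.i) to upgrade $V_{i+1}$ to a conditional expectation. Restricted to the dense subclass $\mathcal{S}_{\Fcal_{t_i}}(\Fcal_{t_{i+1}})$, the Sure-Thing Principle delivers additive separability of $V_{i+1}$ across disjoint $\Fcal_{t_{i+1}}$-events and strict monotonicity (M.i) makes each summand strictly increasing in its payoff argument. A conditional version of Wakker--Zank, applied $\omega$-by-$\omega$ on $\Fcal_{t_i}$-sections (using the hypothesis that $\Fcal_{t_{i+1}}$ contains three essential disjoint events, which rules out the degenerate low-dimensional exceptions in \cite{WZ99}), then produces a kernel $\widetilde{\Prob}(\cdot\mid\omega)$ on $\Fcal_{t_{i+1}}$ and a state-dependent utility $u_{i+1}$, strictly increasing, $\star$-continuous, with $u_{i+1}(0,\omega)=0$, such that
\begin{equation*}
V_{i+1}(f)(\omega) = \int_{\Omega} u_{i+1}(f(\omega'),\omega')\, d\widetilde{\Prob}(\omega'\mid\omega)
\end{equation*}
for every conditional simple $f$. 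Pointwise continuity (C.i), through the partition $\{A_k\}$ it supplies, yields two-sided sandwich inequalities that, via dominated convergence against the kernel, extend the representation to all $f \in \cl^{\infty}(\Fcal_{t_{i+1}})$. Defining $\Prob_{i+1}(B) := \int \widetilde{\Prob}(B\mid\omega)\, d\Prob_i(\omega)$ for $B \in \Fcal_{t_{i+1}}$ produces an extension of $\Prob_i$ for which $\widetilde{\Prob}(\cdot\mid\omega)$ is the regular conditional probability given $\Fcal_{t_i}$; assertion (d) then becomes the stated conditional-expectation representation.

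Relative uniqueness: if $(\Prob^*,u^*)$ also represents the same family of preferences, the CCE is intrinsic (determined by $\succeq_{i,i+1}$), hence $u^*(t_i,C_{i,i+1}(f))=E_{\Prob^*}[u^*(t_{i+1},f)\mid\Fcal_{t_i}]$ coincides with the analogous identity under $(\Prob,u)$ for every $f$; matching the two and applying the Radon--Nikodym theorem on $\Fcal_{t_i}$ forces $u^*(t_i,\cdot,\cdot)=\delta_i u_i$ $\Prob$-a.s., with $\delta_i = d\Prob_{|\Fcal_{t_i}}/d\Prob^*_{|\Fcal_{t_i}}$, and equivalence on $\Fcal_{t_N}$ follows because null events are intrinsic to $\succeq_{i,i+1}$. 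The principal obstacle I expect is the conditional Wakker--Zank step: preference axioms stated on conditional simple acts and on $\Fcal_{t_i}$-pastings must be converted into a genuine integral representation in which a single state-dependent utility $u_{i+1}$ is defined coherently on all of $\Omega$ and the kernel $\widetilde{\Prob}(\cdot\mid\omega)$ is countably additive in its first argument and measurable in its second. Ensuring the utilities extracted from different $\Fcal_{t_i}$-atoms glue after the normalisation $u_{i+1}(0,\omega)=0$, and keeping careful bookkeeping of null events inherited from step $i$ versus those generated at step $i+1$, are the main technical burdens.
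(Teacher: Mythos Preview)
Your overall inductive architecture matches the paper's, and your treatment of the base case, the construction of the CCE under (T.i), and the relative uniqueness are broadly correct. The substantive divergence is in how you pass from the functional $V_{i+1}$ to a conditional-expectation representation.

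You propose a ``conditional Wakker--Zank applied $\omega$-by-$\omega$ on $\Fcal_{t_i}$-sections'' producing a kernel $\widetilde{\Prob}(\cdot\mid\omega)$. This is not an available theorem in the paper (or in \cite{WZ99,CL06}), and the obstacles you yourself flag---measurable selection of the $\omega$-wise probabilities, countable additivity of the kernel, gluing the $\omega$-wise utilities into a single $\Fcal_{t_{i+1}}$-measurable $u_{i+1}$---are exactly the reasons such a result is hard to state and prove. When $\Fcal_{t_i}$ is not atomic there are no ``sections'' to work on, and even in the atomic case the cardinal uniqueness in Theorem~\ref{formaintegrale} is only up to an affine rescaling, so patching across atoms is genuinely delicate. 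As written, this step is a gap.

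The paper avoids this entirely by a reduction to the \emph{unconditional} case. Having $V_{i+1}$ in hand (with the localisation $V_{i+1}(f\mathbf{1}_A)=V_{i+1}(f)\mathbf{1}_A$ for $A\in\Fcal_{t_i}$), one defines a new \emph{scalar} preference $\preceq_{0,i+1}$ by $a\preceq_{0,i+1} f \iff u_0(a)\le E_{\Prob_i}[V_{i+1}(f)]$, checks that it satisfies (T.0)--(C.0), and applies Proposition~\ref{main:0} once to obtain a single probability $\widetilde{\Prob}$ on $\Fcal_{t_{i+1}}$ and a single $u_{i+1}$ with $E_{\Prob_i}[V_{i+1}(f)]=E_{\widetilde{\Prob}}[\widetilde{u}(f)]$. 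A change of measure makes $\Prob_{i+1}$ agree with $\Prob_i$ on $\Fcal_{t_i}$, and then the localisation property, tested against every $A\in\Fcal_{t_i}$, upgrades the scalar identity to $V_{i+1}(f)=E_{\Prob_{i+1}}[u_{i+1}(f)\mid\Fcal_{t_i}]$ $\Prob_i$-a.s. This ``integrate down, then disintegrate up'' manoeuvre is the key idea you are missing; it sidesteps all measurable-selection and gluing issues by never leaving the unconditional Wakker--Zank theorem.
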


\begin{example}[Forward performances]
\label{comparison} Comparing the representation of ITP provided in Theorem \ref{main:theorem} with the existing literature about forward performances (see for instance \cite{AZZ18,MZ06,MZ09}), we may immediately notice that our approach does not rely on the existence of a financial market.  We recall that an adapted
process $U(x,t)$ on a fixed probability space $(\Omega,\Fcal, \{\Fcal_t\}_{t\geq 0}, \Prob)$ is said to be a Forward Performance if: i) it is increasing and concave as a function of $x$ for each $t$; ii) $U(x,0)=u_0(x)\in \R$; iii) for all $T\geq t$ and each self-financing strategy represented by $\pi
$, the associated discounted wealth $X^{\pi}$ satisfies
$E_{\Prob}[U(X_T^{\pi},T)\,|\,\Fcal_t]\leq U(X_t^{\pi},t)$;
iv) for all $T\geq t$ there exists a self-financing strategy $\pi^*$ such
that $X^{\pi^*}$ satisfies the equality in point iii).
\\ A posteriori we therefore know that the couple $(\Prob, U(t,x))$ defines an intertemporal relation $\succeq_{s,t}$ as usual by $U(s, \cdot) \ge
E_{\mathbb{P}}[U(t,\cdot)| \mathcal{F}_{s}] \quad
\mathbb{P}\text{-a.s.}$. In partiular for the optimal policy we have the relation $X_s^{\pi^*}\sim_{s,t} X_t^{\pi^*}$\footnote{This can be compared with the discussion in the Introduction, see Equation \eqref{Merton}.}. On the other hand not all the intertemporal preferences are necessarily related to an existing financial market.  
\end{example}

\paragraph{On the discount factor.} The role of discounting in the theory of dynamic choices can be described on two different layers. 
\\\emph{First layer:} in equations \eqref{consumption1}, \eqref{consumption2} we see an explicit dependence on a discount factor related to the utility $u$, which is motivated by the fact that $u$ is homogeneous in time. Indeed in our framework  $u(t,x,\omega)$ varies stochastically in time and therefore it is not possible to disentagle the contribution of discounting from the utility in a unique way. Moreover the uniqueness of the representation is up to equivalent change of measures and therefore the discount factor would be in any case sensitive to probabilistic measure changes. Nevertheless there are situations in which it is possible to determine a discounting process. For instance assume the Decion Maker finds a couple $\Prob, u$ which represents the ITP as in Theorem \ref{main:theorem}, (d). At the same time the Decion Maker may be endowed by a subjective probability $\Prob^*$ and decide to perform a change of measure   without changing the Stochastic Dynamic Utility $u$. In such a situation the adapted process $\{\beta_t\}_{t\geq 0}$ defined by $\beta_t=E_{\Prob}[\frac{d\Prob}{d\Prob^*}|\Fcal_t]$ can be interpreted as a stochastic discount factor: indeed for $g\in \cl^{\infty}(\Omega,\Fcal_{t_i})$, $f\in \cl^{\infty}(\Omega,\Fcal_{t_{i+1}})$ 
\begin{eqnarray*}g \succeq_{i,i+1} f & \iff & \beta_{t_i}u(t_i, g) \ge
E_{\mathbb{P}^*}[\beta_{t_{i+1}}u(t_{i+1},f)| \mathcal{F}_{t_i}] \quad
\mathbb{P}^*\text{-a.s.}
\\ g \preceq_{i,i+1} f & \iff & \beta_{t_i} u(t_i, g) \le
E_{\mathbb{P}^*}[\beta_{t_{i+1}}u(t_{i+1},f)| \mathcal{F}_{t_i}] \quad
\mathbb{P}^*\text{-a.s.}
\end{eqnarray*} 
\\\emph{Second layer:} in the theory of Forward Performances \cite{MZ06,MZ09}, the representation of ITP does not show any explicit dependence on a (stochastic) discount factor, but the utility $U$ is computed directly on discounted wealth processes assuming that a reference numeraire exists \emph{a priori}. Let $\{B_t\}_{t\in [0,+\infty)}$ be an adapted stochastic process acting as a numeraire, with $\Prob(B_t>\varepsilon)=1$ for some $\varepsilon>0$. If $\Prob, u$  represent the ITP then we can set $u^*(t,x,\omega):=u(t,x\cdot B_t(\omega),\omega)$ and obtain 
\begin{eqnarray*}g \succeq_{i,i+1} f & \iff & u^*(t_i, g^*) \ge
E_{\mathbb{P}}[u^*(t_{i+1},f^*)| \mathcal{F}_{t_i}] \quad
\mathbb{P}\text{-a.s.}
\\ g \preceq_{i,i+1} f & \iff &  u^*(t_i, g^*) \le
E_{\mathbb{P}}[u^*(t_{i+1},f^*)| \mathcal{F}_{t_i}] \quad
\mathbb{P}\text{-a.s.}
\end{eqnarray*}
where $g^*=\frac{g}{B_{t_i}}$ and $f^*=\frac{f}{B_{t_{i+1}}}$ are the discounted values of $g$ and $f$.

\paragraph{Time consistency of intertemporal preferences.}
The family $\{\succeq_{i,i+1}\}$ of intertemporal preferences is meant to create a link between two successive times $t_i$ and $t_{i+1}$ in order to compare random payoffs whose effects will be known and exploitable at different times. The procedure is a step by step updating and simple inspections show that the following semigroup property holds true for the Conditional Certainty Equivalent
\begin{equation}\label{semigroup}
C_{s,v}(f)= C_{s,t}(C_{t,v}(f)) 
\footnote{Abuse of notation: the precise formulation should be $C_{s,v}(f)= C_{s,t}(g)$ where $g\in \cl(\Fcal_t)$ is a version of $C_{t,v}(f)$.}
\quad \forall\,0\leq s<t<v \text{ and } f\in \cl^{\infty}(\Omega,\Fcal_{v})  
\end{equation}
where for any $s<t$ the operator $C_{s,t}(\cdot)$ is the ($\Prob$-a.s. unique) solution of the equation $u(s,C_{s,t}(\cdot))=E_{\Prob}[u(t,\cdot)\mid \Fcal_s]$ and $u$ is the Stochastic Dynamic Utility obtained in Theorem \ref{main:theorem}. As an immediate consequence  we can extend the intertemporal preferences to any $s<t$ as follows 
\begin{eqnarray*}g \succeq_{s,t} f & \iff & u(s, g) \ge
E_{\mathbb{P}}[u(t,f)| \mathcal{F}_{s}] \quad
\mathbb{P}\text{-a.s.}
\\ g \preceq_{s,t} f & \iff & u(s, g) \le
E_{\mathbb{P}}[u(t,f)| \mathcal{F}_{s}] \quad
\mathbb{P}\text{-a.s.}
\end{eqnarray*}
where $g\in \cl^{\infty}(\Omega,\Fcal_{s})$ and $f\in \cl^{\infty}(\Omega,\Fcal_{t})$. In virtue of the semigroup property \eqref{semigroup} we obtain the following time consistency of preferences
\begin{proposition}\label{timecon} Let $0\leq s<t<v$ and let $g\in \cl^{\infty}(\Omega,\Fcal_{s})$ and $f\in \cl^{\infty}(\Omega,\Fcal_{v})$ such that 
 $g \succeq_{s,v} f$ (resp. $g \preceq_{s,v} f$).  Then $g\succeq_{s,t} h$ (resp. $g \preceq_{s,t} h$) for any $h\in \cl^{\infty}(\Omega,\Fcal_{t})$ such that $h\sim_{t,v} f$.     
\end{proposition}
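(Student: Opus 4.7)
The proof is essentially a direct application of the integral representation of Theorem \ref{main:theorem} combined with the tower property of conditional expectation. First, I would translate each of the three intertemporal comparisons into an inequality/equality between conditional expectations of the Stochastic Dynamic Utility $u$: the hypothesis $g \succeq_{s,v} f$ becomes $u(s,g) \ge E_{\Prob}[u(v,f)\mid \Fcal_s]$ $\Prob$-a.s.; the hypothesis $h \sim_{t,v} f$ becomes the equality $u(t,h) = E_{\Prob}[u(v,f)\mid \Fcal_t]$ $\Prob$-a.s.; the desired conclusion $g \succeq_{s,t} h$ amounts to $u(s,g) \ge E_{\Prob}[u(t,h)\mid \Fcal_s]$ $\Prob$-a.s.

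Second, I would chain these three facts: taking $\Fcal_s$-conditional expectation of the equality for $h\sim_{t,v} f$ and using $\Fcal_s \subseteq \Fcal_t$, one gets
\begin{equation*}
E_{\Prob}[u(t,h)\mid \Fcal_s] = E_{\Prob}\bigl[E_{\Prob}[u(v,f)\mid \Fcal_t]\mid \Fcal_s\bigr] = E_{\Prob}[u(v,f)\mid \Fcal_s]
\end{equation*}
$\Prob$-a.s., by the tower property. Combining with the hypothesis yields $u(s,g) \ge E_{\Prob}[u(t,h)\mid \Fcal_s]$, which is exactly the representation of $g \succeq_{s,t} h$. The case $g \preceq_{s,v} f$ is identical with reversed inequalities.

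Equivalently, one can argue directly at the level of the semigroup identity \eqref{semigroup}: since $h \sim_{t,v} f$ means $h$ is a version of $C_{t,v}(f)$, the equality $C_{s,v}(f) = C_{s,t}(C_{t,v}(f)) = C_{s,t}(h)$ holds $\Prob$-a.s.; hence comparing $g$ to $h$ in the $\succeq_{s,t}$ order reduces to comparing $g$ to $C_{s,v}(f)$, which is controlled by the hypothesis $g \succeq_{s,v} f$.

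\textbf{Main obstacle.} There is no serious analytical obstacle; the only point requiring care is that equivalence classes and $\Prob$-a.s. qualifications must be handled consistently. Specifically, when applying $E_{\Prob}[\,\cdot\,\mid \Fcal_s]$ to the $\Prob$-a.s. identity for $h$, one must ensure integrability of $u(t,h)$ and $u(v,f)$ (guaranteed by part (a) of Theorem \ref{main:theorem} together with boundedness of $h,f$ and property (d)), so that the tower property applies without exception sets growing in a problematic way. Modulo these measure-theoretic bookkeeping remarks, the statement follows from two lines of conditional-expectation manipulation.
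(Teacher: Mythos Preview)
Your proposal is correct and matches the paper's approach: the paper does not spell out a proof but states that the proposition follows ``in virtue of the semigroup property \eqref{semigroup}'', which is exactly your second argument, and your first argument via the tower property of conditional expectation is simply the unpacked version of that same identity.
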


\medskip

Since the proof of Theorem \ref{main:theorem} will proceed
inductively we choose to present the theory in the simpler
unconditioned case $\preceq_{0,1}$ (see Section
\ref{unconditioned:updating}). The results in the next section
will be therefore necessary to prove the initial step in the
induction argument of Theorem \ref{main:theorem}.
\\Moreover we stress that the relative uniqueness is sharper than in representation results like those contained in \cite{CL06,WZ99}. This
follows from the fact that the $u_0$ is fixed a priori (together with the normalization condition $u(t_i,0,\omega)=0$) 
and plays the role of an initial (constraining) condition (see also Proposition \ref{main:0} for further details).


\section{Unconditioned intertemporal preference}\label{unconditioned:updating}


We consider a Decision Maker  who compares an initial amount of
some good, whose value is surely determined (and its benefit is
immediate) with respect to bounded random payoffs (e.g. bets,
assets, future value of goods) at a fixed time $t_1$ represented
by elements in the space
$\cl^{\infty}(\mathcal{F}_{t_1})$. We say that the agent
is initially naive, as the initial information are represented by
the trivial $\Fcal_0=\{\emptyset,\Omega\}$ and therefore the space
$\cl^{\infty}(\mathcal{F}_0)$ is isometric to the real
line $\mathbb{R}$.

Consider the transition preference $\preceq_{0,1}$ (or
$\succeq_{0,1}$) which connects
$\cl^{\infty}(\mathcal{F}_{t_1})$ to
$\cl^{\infty}(\mathcal{F}_{0})$. As already observed in the case $i=0$ the first
Axiom (T.0) is composed only by four requirements: completeness, transitivity, normalization and non
degeneracy (which is the more technical requirement we shall use in Lemma \ref{equality0}).

\begin{remark} From Notation \ref{notation} we can easily deduce the meaning of the symbols $\sim_{0,1}$,
$\succ_{0,1}$, $\prec_{0,1}$. We also
recall that the set of null events induced by $\preceq_{0,1}$ is
given by
\begin{equation*}\label{Null1}\mathcal{N}(\mathcal{F}_{t_1}) = \{ A \in \mathcal{F}_{t_{1}}
: a \sim_{0,1} f \Rightarrow a \sim_{0,1} b \mathbf{1}_A + f
\mathbf{1}_{A^c}, \forall f \in
\cl^{\infty}(\mathcal{F}_{t_{1}}), a,b \in \R \}.
\end{equation*}
\end{remark}

\begin{definition}\label{CCE0} If $a\sim_{0,1} f$ then we shall call $a$
the (Conditional) Certainty Equivalent of $f$ and denote the
family of all CCEs as $C_{0,1}(f)$.
\end{definition}

We now show that under (T.0) the CCE exists and is unique. Notice
that this notion of certainty equivalent matches the dynamic
generalization introduced by \cite{FM11}. The CCE will also
provide a natural representation of the intertemporal preference
$\preceq_{0,1}$ (see the following Proposition
\ref{representation:0}).
\\Consider the maps
\begin{eqnarray*} V_{1}^-(f)  =  \sup\{u_0(a) \mid a\preceq_{0,1} f  \}
&\text{ and }& V_{1}^+(f)  =  \inf\{u_0(a) \mid a\succeq_{0,1} f
\},
\end{eqnarray*}
where $u_0$ will be always supposed to fullfill Assumption \ref{overall:assumption}. 

We note that in the definition of $V_{1}^{\pm}(f)$, $u_0$ needs not to be fixed. Indeed if we consider the total ordering on $\cl^{\infty}(\mathcal{F}_{t_1})$ induced by the functionals $V_{1}^{\pm}(\cdot)$ (i.e. $f_1\preceq_1 f_2$ if and only if $V_{1}^{\pm}(f_1)\leq V_{1}^{\pm}(f_2)$) this would not be affected by the choice of $u_0$. Nevertheless as previously explained  we prefer to think $u_0$ as an initial data characterizing the decision maker, with the advantage of obtaining a sharper notion of uniqueness.     

\begin{lemma}\label{equality0} Under (T.0) and Assumption \ref{overall:assumption} the maps $V_1^+, V^-_1$ are well defined from
$\cl^{\infty}(\mathcal{F}_{t_1})$ to $\R$. Moreover $V_1^+(f)=V^-_1(f)$ for any $f\in
\cl^{\infty}(\mathcal{F}_{t_1})$.
\end{lemma}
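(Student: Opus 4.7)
The plan is to establish well-definedness first, then the soft inequality $V_1^-(f) \leq V_1^+(f)$ directly from transitivity, and finally the equality by a contradiction argument that combines completeness with the intermediate value theorem applied to $u_0$.

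For \textbf{well-definedness}, I would invoke non-degeneracy (item 4 of (T.0)) to produce $y,z \in \R$ with $y \preceq_{0,1} f$ and $z \succeq_{0,1} f$; this shows that the sets $S^-(f) := \{a \in \R : a \preceq_{0,1} f\}$ and $S^+(f) := \{a \in \R : a \succeq_{0,1} f\}$ are nonempty. Transitivity (item 2) then yields $a \leq b$ whenever $a \in S^-(f)$ and $b \in S^+(f)$, so $S^-(f)$ is bounded above and $S^+(f)$ is bounded below. Because $u_0$ is real-valued and strictly increasing, it follows that $V_1^-(f) = \sup u_0(S^-(f))$ and $V_1^+(f) = \inf u_0(S^+(f))$ belong to $\R$, and the same transitivity immediately delivers $V_1^-(f) \leq V_1^+(f)$.

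For the \textbf{equality} I would argue by contradiction: assume $V_1^-(f) < V_1^+(f)$ and fix any real $c$ strictly between them. Using the $y,z$ above together with strict monotonicity of $u_0$, one has $u_0(y) \leq V_1^-(f) < c < V_1^+(f) \leq u_0(z)$. Continuity of $u_0$ and the intermediate value theorem then produce an $a \in [y,z]$ with $u_0(a) = c$. Completeness (item 1 of (T.0)) now forces either $a \preceq_{0,1} f$, whence $c = u_0(a) \leq V_1^-(f)$, or $a \succeq_{0,1} f$, whence $c = u_0(a) \geq V_1^+(f)$; both alternatives contradict the choice of $c$.

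The crux, and the main obstacle one has to handle, is precisely the passage from a putative gap between $V_1^-(f)$ and $V_1^+(f)$ to a concrete $a \in \R$ whose utility sits strictly inside that gap. This is where Assumption \ref{overall:assumption} is indispensable: without continuity and strict monotonicity of $u_0$ the intermediate value step would fail, and completeness on its own would not be enough to rule out a genuine discontinuity in the ordering.
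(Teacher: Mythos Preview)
Your proposal is correct and follows essentially the same route as the paper: non-degeneracy for nonemptiness/finiteness, transitivity for $V_1^-(f)\le V_1^+(f)$, and then a contradiction via completeness after using continuity and strict monotonicity of $u_0$ to produce a point whose utility lies strictly inside the putative gap. The only cosmetic difference is that you make the intermediate value step explicit on the interval $[y,z]$, whereas the paper simply asserts the existence of such a point.
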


\begin{proof} From completeness $V_1^{\pm}$ are well
defined and taking values in $\R\cup \{\pm \infty\}$. The fact
that $V_1^{\pm}(f)$ are finite for any $f\in
\cl^{\infty}(\mathcal{F}_{t_1})$ follows from non
degeneracy in Axiom (T.0). \\For any $a,b\in\R$ such that $a
\preceq_{0,1} f$ and $b \succeq_{0,1} f$ we have $u_0(a)\leq
u_0(b)$ and therefore $V_1^-(f)\leq V^+_1(f)$. Now assume by
contradiction $V_1^-(f)< V^+_1(f)$: since $u_0$ is strictly
increasing and continuous there exists $c$ such that $u_0(c)\in
(V_1^-(f),V^+_1(f))$. From completeness either $c \succeq_{0,1} f$
or $c \preceq_{0,1} f$ getting in both cases a contradiction since
$$\sup\{u_0(a) \mid a\preceq_{0,1} f  \}
 < u_0(c)< \inf\{u_0(a) \mid a\succeq_{0,1} f
\}.$$
\end{proof}

\begin{notation} From now on, whenever (T.0) and Assumption \ref{overall:assumption} are in force we shall denote $V_1 := V_1^+ \equiv V^-_1$.
\end{notation}

\begin{proposition} \label{representation:0} Let (T.0) and  Assumption \ref{overall:assumption} hold. Then for any $f\in \cl^{\infty}(\mathcal{F}_{t_{1}})$ there
exists a unique Conditional Certainty Equivalent given by
$C_{0,1}(f)=u_0^{-1}V_1(f)$. Moreover $V_1$ takes values in the range of $u_0$ and represents the
transition order i.e.
\begin{eqnarray}
a\preceq_{0,1} f & \Leftrightarrow & u_0(a)\leq V_1(f) \label{representation1}
\\ a\succeq_{0,1} f & \Leftrightarrow & u_0(a)\geq V_1(f) \label{representation2}
\end{eqnarray}
\end{proposition}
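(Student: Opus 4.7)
My plan is to set $c := u_0^{-1}(V_1(f))$, argue that this quantity is legitimate, show that $c$ is the unique CCE of $f$, and then read the representation off from the definitions of $V_1^{\pm}$. To see that $c$ makes sense, I would first invoke non-degeneracy in (T.0) to produce $y,z\in\R$ with $y\preceq_{0,1}f$ and $z\succeq_{0,1}f$. Transitivity yields $y\leq z$, and the very definitions of $V_1^{\pm}$ together with Lemma \ref{equality0} give
$$u_0(y)\;\leq\; V_1^{-}(f)\;=\;V_1^{+}(f)\;=\;V_1(f)\;\leq\; u_0(z).$$
Since $u_0$ is continuous and strictly increasing, its image is an interval containing $[u_0(y),u_0(z)]$, so $V_1(f)$ lies in the range of $u_0$ and $c$ is a well-defined real number.

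Next, from $u_0(c)=V_1(f)$ and strict monotonicity of $u_0$, the definitions of $V_1^{\pm}$ deliver a clean dichotomy: if $a>c$ then $u_0(a)>V_1^{-}(f)=\sup\{u_0(b):b\preceq_{0,1}f\}$, so $a\notin\{b:b\preceq_{0,1}f\}$, and by completeness $a\succeq_{0,1}f$; symmetrically, $a<c$ forces $a\preceq_{0,1}f$ only. The core step is then to upgrade this picture at the boundary point $c$ to the full equivalence $c\sim_{0,1}f$. Completeness supplies at least one of $c\succeq_{0,1}f$ or $c\preceq_{0,1}f$; the reverse relation must be squeezed out by combining transitivity, which forces $\{a:a\preceq_{0,1}f\}\subseteq(-\infty,c]$ and $\{a:a\succeq_{0,1}f\}\subseteq[c,\infty)$, with the fact that $u_0(c)$ is precisely the common value of the sup and inf defining $V_1^{\pm}(f)$. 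I expect this to be the most delicate part: excluding the asymmetric configurations in which $c$ sits in only one of the two sets will likely proceed by contradiction, playing the approximating sequences from above and below off against transitivity with $f$ so as to collapse against $V_1(f)=u_0(c)$.

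Uniqueness is then immediate: any $c'\sim_{0,1}f$ belongs to both $\{a:a\preceq_{0,1}f\}$ and $\{a:a\succeq_{0,1}f\}$, hence $V_1^{+}(f)\leq u_0(c')\leq V_1^{-}(f)$, and by Lemma \ref{equality0} these are equalities, so $u_0(c')=V_1(f)$ and $c'=c$ by injectivity of $u_0$. For the representation \eqref{representation1}--\eqref{representation2}, the implications $(\Rightarrow)$ are built into the definitions of $V_1^{\pm}$, while the converse directions follow from the dichotomy above together with $c\sim_{0,1}f$: if $u_0(a)\leq V_1(f)=u_0(c)$ then either $a<c$, in which case $a\preceq_{0,1}f$ by the dichotomy, or $a=c$, in which case $a\preceq_{0,1}f$ by the CCE property just established; the argument for $\succeq_{0,1}$ is symmetric.
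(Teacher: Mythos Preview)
Your plan follows essentially the same route as the paper's proof: invoke Lemma~\ref{equality0} and Assumption~\ref{overall:assumption} for existence and uniqueness of $c=u_0^{-1}(V_1(f))$, read the forward implications in \eqref{representation1}--\eqref{representation2} off the definitions of $V_1^{\pm}$, and for the reverse obtain the strict cases from completeness together with the $\inf/\sup$ characterizations (if $u_0(a)<V_1(f)=V_1^{+}(f)$ then $a\not\succeq_{0,1}f$, hence $a\preceq_{0,1}f$; symmetrically for $>$). The boundary step $u_0(a)=V_1(f)\Rightarrow a\sim_{0,1}f$ that you single out as delicate is treated in the paper as a one-line observation without further argument; your approximating-sequences-plus-transitivity device does not obviously supply more, since (T.0) alone carries no continuity axiom that would pass $b_n\preceq_{0,1}f$, $b_n\to c$, to $c\preceq_{0,1}f$, so on this point you end up in the same position as the paper rather than furnishing a stronger justification.
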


\begin{proof} Existence and uniqueness follow from the previous Lemma
\ref{equality0} and Assumption \ref{overall:assumption}. Notice
that $a\preceq_{0,1} f$ (resp. $a\succeq_{0,1} f$) obviously
implies $ u_0(a)\leq V_1(f)$ (resp. $u_0(a)\geq V_1(f)$). For the
reverse implication we can observe that $u_0(a)=V_1(f)$ implies
$a\sim_{0,1} f$. If instead $u_0(a)<V_1(f)$ (resp.
$u_0(a)>V_1(f)$), then necessary $a\prec_{0,1} f$ (resp.
$a\succ_{0,1} f$) as $V_1(f)=\inf\{u_0(a) \mid a\succeq_{0,1} f
\}$ (resp. $V_1(f)=\sup\{u_0(a) \mid a\preceq_{0,1} f \}$).
\end{proof}

We will take into consideration the following axioms: 
monotonicity, the Sure Thing Principle and a technical continuity, which we recall here to clarify their meaning in this simplified unconditioned case.

\begin{description}\item[(M.0)] Strict Monotonicity:
for all $a, b, c \in \mathbb{R}$, $f\in \cl^{\infty}(\mathcal{F}_{t_{1}})$ $A\in
\Fcal_{t_1}\setminus\mathcal{N}(\mathcal{F}_{t_1})$ and $a < b$ we
have $c\sim_{0,1} a\mathbf{1}_A + f \mathbf{1}_{A^c}$ implies $c
\prec_{0,1} b \mathbf{1}_A + f \mathbf{1}_{A^c}$ (resp.
$c\sim_{0,1} b \mathbf{1}_A + f \mathbf{1}_{A^c}$ implies $c
\succ_{0,1} a \mathbf{1}_A + f \mathbf{1}_{A^c}$).


\item[(ST.0)] Sure-Thing Principle: consider arbitrary $f,g,h\in
\mathcal{S}(\mathcal{F}_{t_1})$, $A\in
\Fcal_{t_1}\setminus\mathcal{N}(\mathcal{F}_{t_1})$ and $a\in \R$
such that $a \succeq_{0,1} f\mathbf{1}_A+h \mathbf{1}_{A^c} $ and
$a \preceq_{0,1} g\mathbf{1}_A+h \mathbf{1}_{A^c}$ then for any
$k\in \mathcal{S}(\mathcal{F}_{t_1})$ there exists $b\in\R$ such
that $b \succeq_{0,1} f\mathbf{1}_A+k \mathbf{1}_{A^c} $ and $b
\preceq_{0,1} g\mathbf{1}_A+k \mathbf{1}_{A^c}$.

\item[(C.0)] Pointwise continuity: consider any uniformly bounded
sequence $\{f_n\}\subseteq \FactI$, such that
$f_n(\omega)\rightarrow f(\omega)$ for any $\omega\in\Omega$, then
for all $a\prec_{0,1}f$ (resp. $a\succ_{0,1}f$ ) there exists $N$
such that $a\preceq_{0,1}f_n$ (resp. $a\succeq_{0,1}f_n$ ) for $n>
N$.

\end{description}

\begin{remark} In the classical Decision Theory (see \cite{Pe16})
the Sure-Thing Principle is a sort of independence principle: it
says that the preference between two acts, $f$ and $g$, should
only depend on the values of $f$ and $g$ when they differ. If $f$
and $g$ differ only on an event $A$, if $A$ does not occur $f$ and
$g$ result in the same outcome exactly. In our intertemporal
framework the interpretation is exactly the same, even though we
need to deal with the comparison at time $0$. 
\end{remark}

\begin{remark}
In the present context the Sure-Thing Principle (ST.0) easily implies
for arbitrary $f,g\in \cl^{\infty}(\mathcal{F}_{t_1})$ and
$A\in \mathcal{F}_{t_1}$: $V_1(f\mathbf{1}_A)\leq V_1(g
\mathbf{1}_A)$ and $V_1(f\mathbf{1}_{A^c})\leq V_1(g
\mathbf{1}_{A^c})$ then $V_1(f)\leq V_1(g)$.
\end{remark}

In the remaining of this section we shall prove the following

\begin{proposition}\label{main:0} Assume that $\Fcal_{t_1}$
contains at least three disjoint essential events and Assumption \ref{overall:assumption} is in force. Axioms (T.0), (M.0), (ST.0) and (C.0) hold if and only if there exists a probability $\mathbb{P}_1$ on
$\Omega$ and a function $u_1(\cdot, \omega) : \mathbb{R} \to
\mathbb{R}$, strictly increasing $\forall \omega \in \Omega$ and $\star$-continuous  such that the functional $V_1$ 
\begin{equation}\label{repr.Step1}
V_1(f)= \int_{\Omega} u_1(f(\omega),\omega) d\mathbb{P}_1
\end{equation}
represents the preference $\preccurlyeq_{0,1}$ (in the sense of \eqref{representation1} and \eqref{representation2}) and takes values in the range of $u_0$.
\\The following uniqueness holds for \eqref{repr.Step1} :
$(\mathbb{P}_1,u_1)$ can be replaced by $(\mathbb{P}^*,u^*)$ if
and only if $\mathbb{P}_1$ is equivalent to $\mathbb{P}^*$ and
$\Prob_1\left( u^* = \delta u_1+\tau\right)=1,$ where $\delta$ is
the Radon-Nikodym derivative of $\mathbb{P}_1$ with respect to
$\mathbb{P}^*$ and $\tau\in \cl(\mathcal{F}_{t_1})$ with
$E_{\mathbb{P}^*}[\tau]=0$.
\end{proposition}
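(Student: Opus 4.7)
The plan is to reduce the representation of $\preceq_{0,1}$ to a classical representation theorem for preferences on state-dependent acts (Theorem \ref{formaintegrale} of Wakker--Zank), using the real-valued numerical representation $V_1$ provided by Proposition \ref{representation:0} as the bridge.

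\textbf{Sufficiency.} Assume $V_1(f)=\int_{\Omega} u_1(f,\omega)\,d\Prob_1$ with $u_1(\cdot,\omega)$ strictly increasing and $\star$-continuous, and define $\preceq_{0,1}$ through \eqref{representation1}--\eqref{representation2}. Axiom (T.0) is straightforward: completeness and transitivity come from the total ordering of $\R$, normalization from $u_0(0)=0=V_1(0)$, and non-degeneracy from boundedness of $f$ together with the fact that $u_0$ is onto $\R$ (after noting that $V_1(f)$ lies in the range of $u_0$). For (M.0) one uses strict monotonicity of $u_1$ coupled with the fact that essential events have $\Prob_1(A)>0$. For (ST.0) observe that $V_1(f\mathbf 1_A + h\mathbf 1_{A^c}) = \int_A u_1(f)d\Prob_1 + \int_{A^c} u_1(h)d\Prob_1$, so the comparison is additively separable across $A$ and $A^c$. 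For (C.0) use $\star$-continuity of $u_1$ and dominated convergence on the uniformly bounded sequence.

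\textbf{Necessity.} Starting from (T.0) and Proposition \ref{representation:0} we have a well-defined functional $V_1\colon \cl^{\infty}(\Fcal_{t_1})\to \R$, with values in the range of $u_0$, representing the transition relation. Define the induced binary relation $\preceq_1$ on $\cl^{\infty}(\Fcal_{t_1})$ by $f\preceq_1 g \Leftrightarrow V_1(f)\le V_1(g)$; this is automatically a complete and transitive weak order. The next step is to translate (M.0), (ST.0) and (C.0) into their standard (unconditional) counterparts for $\preceq_1$:
\begin{itemize}
\item strict monotonicity on essential events, by noting that if $f<g$ on $A\notin \mathcal N(\Fcal_{t_1})$ and $f=g$ on $A^c$, then inserting the CCE of $g\mathbf 1_A + f\mathbf 1_{A^c}$ into (M.0) forces $V_1(f)<V_1(g)$;
\item additive/Sure-Thing separability: from (ST.0) one derives that $V_1(f\mathbf 1_A + h\mathbf 1_{A^c})\le V_1(g\mathbf 1_A + h\mathbf 1_{A^c})$ is independent of the choice of $h$, which is exactly the Sure-Thing principle in the numerical representation;
\item pointwise continuity: a sandwich argument using (C.0) and the strict inequalities $a\prec_{0,1} f \prec_{0,1} b$ gives upper and lower semicontinuity of $V_1$ along bounded pointwise-convergent sequences.
\end{itemize}
Together with the assumption that $\Fcal_{t_1}$ carries three disjoint essential events, these are exactly the structural hypotheses of Wakker--Zank (Theorem \ref{formaintegrale} in the Appendix). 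Applying it to $\preceq_1$ yields a probability $\Prob_1$ on $\Fcal_{t_1}$ and a state-dependent utility $u_1(\cdot,\omega)$, strictly increasing in $x$, such that the preference $\preceq_1$ is represented by $f\mapsto \int_{\Omega} u_1(f,\omega)\,d\Prob_1$. Since $V_1$ and $\int u_1(\cdot)\,d\Prob_1$ both represent the same total order, and the Wakker--Zank uniqueness allows an affine rescaling of $u_1$, we can normalize so that $V_1(f)=\int u_1(f,\omega)\,d\Prob_1$, preserving the property that values lie in the range of $u_0$. The $\star$-continuity of $u_1$ will follow by tracing through (C.0) applied to sequences of simple acts that approximate the values $f(\omega)$ pointwise, combined with the strict monotonicity already established.

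\textbf{Uniqueness.} Suppose $(\Prob^*,u^*)$ is another representation, so $\int u^*(f)\,d\Prob^* = \int u_1(f)\,d\Prob_1$ for every $f\in \cl^{\infty}(\Fcal_{t_1})$. Taking $f=c\mathbf 1_A$ for arbitrary $A$ and $c\in\R$, varying $A$ and $c$, forces $\Prob^*(A)=0 \iff \Prob_1(A)=0$, i.e.\ $\Prob^*\sim \Prob_1$; let $\delta=d\Prob_1/d\Prob^*$. Then $\int (u^* - \delta u_1)(f)\,d\Prob^* = 0$ for every bounded $f$; applying this to $f=c\mathbf 1_A$ for all $c$ and $A$, and using strict monotonicity in $x$ (which rules out cancellations across level sets), forces $u^* - \delta u_1$ to be a function of $\omega$ alone, i.e.\ an element $\tau\in \cl(\Fcal_{t_1})$, with $E_{\Prob^*}[\tau]=0$ coming from the case $f\equiv 0$.

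\textbf{Main obstacle.} The crux is translating the intertemporal axioms (T.0), (M.0), (ST.0), (C.0)---which compare objects living in different spaces ($\R$ vs.\ $\cl^{\infty}(\Fcal_{t_1})$)---into axioms on a \emph{single-space} weak order $\preceq_1$ satisfying exactly the hypotheses of Wakker--Zank. The delicate point is the Sure-Thing direction, where one has to use Proposition \ref{representation:0} repeatedly to convert statements about $a\preceq_{0,1} f\mathbf 1_A + h\mathbf 1_{A^c}$ into numerical inequalities on $V_1$ and show the resulting additive structure is genuinely independent of the common part $h$. The continuity step is likewise subtle because the $\star$-continuity notion tailored for stochastic fields must be extracted from the pointwise continuity axiom (C.0).
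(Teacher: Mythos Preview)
Your overall strategy coincides with the paper's: define the weak order $\preceq_1$ on $\cl^{\infty}(\Fcal_{t_1})$ via $V_1$, verify that (T.0), (M.0), (ST.0), (C.0) translate into the Wakker--Zank axioms (A1)--(A4), apply Theorem~\ref{formaintegrale}, and check the converse by direct computation with the integral. For uniqueness the paper simply invokes the uniqueness clause of Theorem~\ref{formaintegrale} and then observes that the positive scale $\sigma$ in $u^*=\tau+\sigma\delta u_1$ is forced to be $1$ because $V_1$ is pinned down by the fixed $u_0$ (via $V_1(f)=u_0(C_{0,1}(f))$); your direct argument with indicator acts reaches the same conclusion, and the phrase ``strict monotonicity rules out cancellations'' is unnecessary---the identity $\int_A(u^*(c,\cdot)-\delta u_1(c,\cdot))\,d\Prob^*=\int_A(u^*(0,\cdot)-\delta u_1(0,\cdot))\,d\Prob^*$ for all $A\in\Fcal_{t_1}$ already forces $u^*(c,\cdot)-\delta u_1(c,\cdot)$ to be independent of $c$ almost surely.

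The one substantive gap is the derivation of $\star$-continuity of $u_1$. Your sketch (``tracing through (C.0) applied to sequences of simple acts that approximate the values $f(\omega)$ pointwise'') is not the argument that works, and it is not clear how simple-act approximation would detect the left/right discontinuity sets of $u_1(\cdot,\omega)$. The paper's proof is a specific contradiction: assume $\Prob_1(LD_{f^*})>0$ for some $f^*$, set $B:=LD_{f^*}$, $f:=f^*\mathbf{1}_B$ and $f_n:=(f-\tfrac1n)\mathbf{1}_B$. Then $f_n\to f$ pointwise and uniformly bounded, while on $B$ the definition of $LD_{f^*}$ gives $\sup_n u_1(f_n(\omega),\omega)<u_1(f(\omega),\omega)$; monotone convergence yields $\lim_n E_{\Prob_1}[u_1(f_n)]<E_{\Prob_1}[u_1(f)]$, so one can choose $a\in\R$ with $a\succ_{0,1}f_n$ for every $n$ yet $a\prec_{0,1}f$, contradicting (C.0). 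Localizing to the measurable discontinuity set (Lemma~\ref{lma:measurable sets}) and shifting by $1/n$ is the missing idea in your outline.
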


\begin{remark}\label{remark:u0} Even though Proposition \ref{main:0} shows many similarities with Theorem \ref{formaintegrale}, some work needs to be done to show that Axioms (T.0), (M.0), (ST.0) and (C.0) are sufficient to apply the results in \cite{CL06,WZ99}. Moreover as $V_1$ is defined via a fixed $u_0$, we shall show that the coefficient $\sigma>0$ appearing in Theorem \ref{formaintegrale} is necessarily equal to $1$. 
\end{remark}

\begin{remark}\label{integrable} We observe that even if not mentioned explicitly, necessarily the random variable $u_1(x, \cdot)$
is integrable with respect to $\Prob_1$ for any $x\in\R$.
\\Moreover if we impose the normalization requirement $u_1(0,\omega)=0$ for every
$\omega\in\Omega$ then $\tau$  is equal to $0$ $\Prob$-a.s..
\end{remark}

\subsection{Proof of Proposition \ref{main:0}}

Observe that the hypothesis of Proposition \ref{representation:0} are satisfied. Hence, the representation $a\succcurlyeq_{0,1} f \iff u_0(a)\geq V_1(f)$ holds where $V_1$ is defined as in Lemma \ref{equality0}. Furthermore, for any $f\in \cl^\infty(\cf_{t_1})$ the CCE $C_{0,1}(f)$ exists and is uniquely given by $u_0^{-1}(V_1(f))$. The existence of the CCE for every act $f$ directly implies that the range of the function $V_1$ is contained in the range of $u_0$.

\paragraph{Proof of ($\Rightarrow$)} We define a weak order on $\FactI$ as $f\preceq
g$ if and only if $V_1(f)\leq V_1(g)$. (T.0) implies $\preceq$ is
complete, reflexive and transitive (i.e. satisfies (A1) in the
Appendix). \\Let $f \in \FactI$ and outcomes $x > y$: indeed (M.0)
implies $V_1(x\mathbf{1}_A + f\mathbf{1}_{A^c})> V_1(
y\mathbf{1}_A + f\mathbf{1}_{A^c})$, for all nonnull events $A\in
\Fcal_{t_1}$ and hence $\preceq$ is strictly monotone in the sense
of (A2). Similarly (ST.0) implies that $\preceq$ satisfies (A3).
\\ Let now $\{f_n\}\subseteq \FactI$, such that
$f_n(\omega)\rightarrow f(\omega)$ for any $\omega\in\Omega$ and
$\|f_n\|_{\infty}<k$ for all $n\in \N$. Let now $g\in \FactI$ such that $g \succ f$ and consider $a=C_{0,1}(g)$ (which exists by Proposition \ref{representation:0}). Then $a\succ_{0,1} f$ and by (C.0) we can find $\bar{n}$ such that for all $n\ge \bar{n}$ we have $a\succ_{0,1} f_n$. Therefore $V_1(g)=u_0(a)>V_1(f_n)$ (similarly for the opposite inequality) showing that (A4) holds for $\preceq$ .


\medskip

We can therefore apply Theorem \ref{formaintegrale} and find the desired
representation \eqref{repr.Step1} namely $V_1(f)= \int_{\Omega}
u_1(f(\omega),\omega) d\mathbb{P}_1 = E_{\Prob_1}[u_1(f)]$ and its uniqueness. Let therefore $\Prob^*$ and $u^*(\cdot,
\cdot) = \tau + \sigma\delta u_1(\cdot, \cdot)$ obtained by
Theorem \ref{formaintegrale}. Observe that $V_1(0)=u_0(0)=0$
implies $E_{\mathbb{P}^*}[\tau]=0$. Moreover as
$u_0(C_{0,1}(f))=V_1(f)=E_{\Prob^*}[u^*(f)]=E_{\Prob_1}[\sigma
u_1(f)]$, we have necessarily $\sigma=1$.

\medskip

We now show that the state dependent utility $u_1$ is $\star$-continuous on $(\Omega,\Fcal_{t_1},\Prob_1)$.
\\To this end consider any $f\in \cl^\infty(\Fcal_{t_1})$. It is sufficient to show that $\Prob_1(LD_f)=0$ where $LD_f$ is the set defined in Appendix \ref{Appendix A} replacing $\phi$ with $u_1$. Indeed, with an analogous argument one obtains $\Prob_1(RD_f)=0$. Ultimately, the thesis follows from the observation that $\Prob_1(D_f)=\Prob_1(LD_f\cup RD_f) = 0$ and the arbitrariness of $f$.

\noindent As consequence of Lemma \ref{lma:measurable sets}, $LD_f\in \Fcal_{t_1}$, hence either $\Prob_1(LD_f)=0$ or $\Prob_1(LD_f)>0$. Suppose, by contradiction, that there exists $f^* \in \cl^\infty(\Fcal_{t_1})$ such that $\Prob_1(LD_{f^*}) > 0$. Set $B:= LD_{f^*}$ and let $f = f^*\mathbf{1}_{B}$ and $f_n = (f - \frac{1}{n})\mathbf{1}_B$. By construction $f,f_n \in \cl^\infty(\Fcal_{t_1})$ for each $n\in\mathbb{N}$, $f_n(\omega) \to f(\omega)$ for each $\omega \in \Omega$ and $\sup_n \norm{f_n}_\infty \leq \norm{f}_\infty + 1$. Furthermore, by the definition of $B$, $u_1(f(\omega),\omega) > \sup_n u_1(f_n(\omega),\omega)$ for each $\omega \in B$ while $u_1(f(\omega),\omega) = u_1(f_n(\omega),\omega) $ for each $\omega \in B^C$. Since $\Prob_1(B) > 0$ and $x\mapsto u_1(x,\omega)$ is increasing, by Monotone Convergence Theorem we have:
\begin{equation*}
    \lim_n E_{\Prob_1}[u_1(f_n,\cdot)] =  E_{\Prob_1}[\sup_n u_1(f_n,\cdot)]  < E_{\Prob_1}[u_1(f,\cdot)]
\end{equation*}
By continuity and strict monotonicity of $u_0$, there exists $a\in\R$ such that
\begin{equation*}
    \sup_n E_{\Prob_1}[u_1(f_n,\cdot)] < u_0(a) < E_{\Prob_1}[u_1(f,\cdot)]
\end{equation*}
that is $a\succ_{0,1} f_n$ $\forall n$ while $a \prec_{0,1} f$. This contradicts Axiom (C.0), hence we conclude that $\Prob_1(B)$ equals zero.

\medskip

\paragraph{Proof of ($\Leftarrow$)} Viceversa, we assume that the preference $\succcurlyeq_{0,1}$ is given by:
\begin{equation*}
    a \succcurlyeq_{0,1} f \iff u_0(a) \geq V_1(f)
\end{equation*}
for $a\in \R$, $f\in \cl^\infty(\Fcal_{t_1})$, with $V_1(f) = E_{\Prob_1}[u_1(f,\cdot)]$,
where $u_1$ and $\Prob_1$ are given as in Proposition \ref{main:0}. We want to show that that $\succcurlyeq_{0,1}$ satisfies Axioms (T.0), (M.0), (ST.0) and (C.0).

Let $a\in \R$ and $f\in \cl^\infty(\Fcal_{t_1})$. Clearly either $u_0(a) \leq V_1(f)$ or $u_0(a) \geq V_1(f)$ hence $\succcurlyeq_{0,1}$ is complete. Consider $a,b\in\R$ and $f\in \cl^\infty(\Fcal_{t_1})$ satisfying $a\preccurlyeq_{0,1} f$ and $b \succcurlyeq_{0,1} f$. This means that $u_0(a) \leq V_1(f) \leq u_0(b)$. From the fact that $u_0$ is strictly increasing it follows that $b\geq a$, that is that $\succcurlyeq_{0,1}$ is transitive. Clearly $0\sim_{0,1} 0$ since $u_0(0) = 0 = E_{\Prob_1}[u_1(0,\cdot)]$. Finally, let $f\in \cl^\infty(\Fcal_{t_1})$. By assumption the range of $V_1$ is contained in the range of $u_0$ so that there exists $b\in\R$ such that $u_0(b) \geq V_1(f)$ and (equivalently) $b\succcurlyeq_{0,1} f$. For the same reason there exists $a\in\R$ such that $u_0(a)\leq V_1(f)$, that is $a\preccurlyeq_{0,1} f$. This means that $\succcurlyeq_{0,1}$ is non-degenerate concluding the proof that Axiom (T.0) holds.

Let $a,b,c \in \R$ with $a<b$, $f \in \cl^\infty(\Fcal_{t_1})$ and $A\in \Fcal_{t_1}$ being non-null. Suppose that $c\sim_{0,1} a\mathbf{1}_A + f \mathbf{1}_{A^C}$, that is $ u_0(c) = E_{\Prob_1}[u_1(a\mathbf{1}_A + f \mathbf{1}_{A^C},\cdot)] = E_{\Prob_1}[u_1(a,\cdot)\mathbf{1}_A+ u_1(f,\cdot)\mathbf{1}_{A^C})]$.

\noindent Now, since $u_1(\cdot,\omega)$ is strictly increasing for each $\omega$, then $E_{\Prob_1}[u_1(a,\cdot)\mathbf{1}_A] < E_{\Prob_1}[u_1(b,\cdot)\mathbf{1}_A]$.
Then $ u_0(c) < E_{\Prob_1}[u_1(b,\cdot)\mathbf{1}_A+ u_1(f,\cdot)\mathbf{1}_{A^C})] = E_{\Prob_1}[u_1(b\mathbf{1}_A + f \mathbf{1}_{A^C},\cdot)]$ which means
$c\prec_{0,1} b\mathbf{1}_A + f \mathbf{1}_{A^C}$. The same argument can be used for $c\sim_{0,1} b\mathbf{1}_A + f\mathbf{1}_{A^C}$ leading to $c\succ_{0,1} a\mathbf{1}_A + f\mathbf{1}_{A^C}$. Thus, $\succcurlyeq_{0,1}$ satisfies (M.0).

(ST.0) follows from the simple fact that $E_{\Prob_1}[u_1(f\mathbf{1}_A+h\mathbf{1}_{A^c},\cdot)]\leq E_{\Prob_1}[u_1(g\mathbf{1}_A+h\mathbf{1}_{A^c},\cdot)]$ if and only if 
$E_{\Prob_1}[u_1(f\mathbf{1}_A+k\mathbf{1}_{A^c},\cdot)]\leq E_{\Prob_1}[u_1(g\mathbf{1}_A+k\mathbf{1}_{A^c},\cdot)]$ whatever the choice of $A\in \Fcal_{t_1}\setminus \cn(\Fcal_{t_1})$ and $f,g,h,k \in \cl(\Fcal_{t_1})$.

Finally, let $(f_n)_{n\in\mathbb{N}}\subseteq \cl^\infty(\Fcal_{t_1})$ be uniformly bounded and converging pointwise to $f$ for each $\omega\in\Omega$. Let $K := \sup_n \norm{f_n}_\infty \in \R^+$. Since the integral representation is pointwise continuous (on uniformly bounded sequences) we have:
\begin{equation}\label{eq:integral limit}
    E_{\Prob_1}[u_1(f_n,\cdot)] \to E_{\Prob_1}[u_1(f,\cdot)]
\end{equation}
Now let $a\in \R$ such that $a \prec_{0,1} f$ and call $\varepsilon := E_{\Prob_1}[u_1(f,\cdot)]-u_0(a) >0$. Then by (\ref{eq:integral limit}) there exists $N\in\mathbb{N}$ such that $|E_{\Prob_1}[u_1(f,\cdot)] -E_{\Prob_1}[u_1(f_n,\cdot)]| < \varepsilon$ $\forall n >N$. The triangular inequality implies that $u_0(a) < E_{\Prob_1}[u_1(f_n,\cdot)]$ $\forall n > N$, that is $a \prec_{0,1} f_n$. The same argument applies to $a \succ_{0,1} f$. Hence, (C.0) holds, concluding the proof.

%



\section{Inductive proof of Theorem \ref{main:theorem}}\label{inductive:proof}


This section is entirely devoted to the proof of the main Theorem of this paper.

\paragraph{On the  direct implication ($\Rightarrow$).}  We shall proceed by
induction. In fact if $N=1$ Theorem \ref{main:theorem} reduces to
Proposition \ref{main:0}, which is proved in the previous Section
\ref{unconditioned:updating}.

\begin{assumption}\label{induction:assumption}[Induction] We assume that the statement is true up to $i$. In particular it means
that we can guarantee the existence of a probability $\Prob_{i}$
on $\Fcal_{t_{i}}$ and state-dependent utilities 
$\{u_k\}_{k=1}^i$, where $u_k(x,\cdot)$ is
$\Fcal_{t_k}$-measurable, integrable, strictly increasing in $x$, $\star$-continuous,
$u_k(0,\cdot)=0$ and 
\begin{eqnarray*}g \succeq_{k-1,k} f & \iff & u_{k-1}(g) \ge
E_{\mathbb{P}_{i}}[u_k(f)| \mathcal{F}_{t_{k-1}}] \quad
\mathbb{P}_{i}\text{-a.s.}
\\ g \preceq_{k-1,k} f & \iff & u_{k-1}(g) \le
E_{\mathbb{P}_{i}}[u_k(f)| \mathcal{F}_{t_{k-1}}] \quad
\mathbb{P}_{i}\text{-a.s.}
\end{eqnarray*}
for any $k=1,\dots,i$, $f\in
\cl^{\infty}(\Omega,\Fcal_{t_{k-1}})$, $g\in
\cl^{\infty}(\Omega,\Fcal_{t_{k}})$.
\end{assumption}

\noindent Under this assumption we shall now prove that the
representation can be forwardly updated to time $t_{i+1}$.

\begin{remark} We point out that $\mathcal{N}(\mathcal{F}_{t_{i}})=\{A\mid
\exists B\in \mathcal{F}_{t_{i}},\; \Prob_i(B)=0\text{ and }
A\subseteq B\}$, where $\mathcal{N}(\mathcal{F}_{t_{i}})$ are the
null sets induced by the relations $\preceq_{i-1,i}$,
$\succeq_{i-1,i}$ as in \eqref{null:events}.
\end{remark}

Although a conditional preference is not total, the following
lemma, which is inspired by Lemma 3.2 in \cite{DJ14}, shows that
local completeness allows to derive for every two acts an
$\mathcal{F}_{t_i}$-measurable partition on which a comparison can
be achieved.

\begin{lemma}
\label{treeventi} Consider any $g \in
\cl^{\infty}(\mathcal{F}_{t_i})$, $f \in
\cl^{\infty}(\mathcal{F}_{t_{i+1}})$. If Assumption \ref{induction:assumption} holds
and $\succeq_{i,i+1}$ satisfies (T.i) then  there exists a
pairwise disjoint family of events $A, B, C \in \Fcal_{t_i}$ such
that $\Prob_i(A \cup B \cup C) = 1$ and
\begin{align*}
g \mathbf{1}_A &\sim_{i,i+1} f \mathbf{1}_A, \\
g  &\succ_{i,i+1}^B f  \\
g & \prec_{i,i+1}^C f .
\end{align*}
\end{lemma}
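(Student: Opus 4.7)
The plan is to build the partition via a maximality argument over the two families of events on which the comparison is resolved in a definite direction. Set
\begin{equation*}
\mathcal{E}_{\succeq} = \{E \in \Fcal_{t_i} : g\mathbf{1}_E \succeq_{i,i+1} f\mathbf{1}_E\}, \qquad
\mathcal{E}_{\preceq} = \{E \in \Fcal_{t_i} : g\mathbf{1}_E \preceq_{i,i+1} f\mathbf{1}_E\}.
\end{equation*}
By consistency (T.i.5) both collections are hereditary (closed under taking $\Fcal_{t_i}$-measurable subsets), and by the countable disjoint pasting stated in Remark \ref{regularity}, both are stable under countable disjoint unions. Rewriting an arbitrary family as a disjoint one in the standard way, the $\Prob_i$-essential suprema $B^{\ast}$ of $\mathcal{E}_{\succeq}$ and $C^{\ast}$ of $\mathcal{E}_{\preceq}$ are themselves in the respective collections, so $g\mathbf{1}_{B^{\ast}} \succeq_{i,i+1} f\mathbf{1}_{B^{\ast}}$ and $g\mathbf{1}_{C^{\ast}} \preceq_{i,i+1} f\mathbf{1}_{C^{\ast}}$.

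Next, set $A := B^{\ast} \cap C^{\ast}$, $B := B^{\ast} \setminus C^{\ast}$, $C := C^{\ast} \setminus B^{\ast}$; these are pairwise disjoint and measurable. By consistency applied to $B^{\ast}$ and to $C^{\ast}$, we get $g\mathbf{1}_A \sim_{i,i+1} f\mathbf{1}_A$ and $g\mathbf{1}_B \succeq_{i,i+1} f\mathbf{1}_B$. If some non-null $E \subseteq B$ satisfied $g\mathbf{1}_E \sim_{i,i+1} f\mathbf{1}_E$, then $E$ would also belong to $\mathcal{E}_{\preceq}$; but $E$ is disjoint from $C^{\ast}$, so by the pasting in Remark \ref{regularity} the event $E \cup C^{\ast}$ would lie in $\mathcal{E}_{\preceq}$ and strictly enlarge $C^{\ast}$, contradicting maximality. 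Hence $g \succ^{B}_{i,i+1} f$, and symmetrically $g \prec^{C}_{i,i+1} f$.

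It remains to show $D := \Omega \setminus (B^{\ast} \cup C^{\ast})$ is $\Prob_i$-null. This is the main obstacle. Assuming $\Prob_i(D) > 0$, the goal is to exhibit a non-null event inside $D$ on which either $g \succeq_{i,i+1} f$ or $g \preceq_{i,i+1} f$ holds, which would enlarge $B^{\ast}$ or $C^{\ast}$ and yield a contradiction. Local completeness (T.i.1) applied to the pair $(g\mathbf{1}_D, f\mathbf{1}_D)$ produces a non-null $A' \in \Fcal_{t_i}$ with comparison on $A'$; on $A' \cap D^c$ both modified acts vanish, so the content of this comparison is carried by $A' \cap D$. The delicate point is to argue that $A' \cap D$ is itself non-null: if $A' \subseteq D^c$ up to a $\Prob_i$-null set, the comparison provided by (T.i.1) is the trivial one $0 \sim_{i,i+1} 0$ (justified by normalization T.i.3 together with the regularity properties of Remark \ref{regularity}), which can be ruled out by taking $A'$ minimal, or by iterating (T.i.1) and using that $B^{\ast}, C^{\ast}$ already absorb every candidate comparable event outside $D$. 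Once $E := A' \cap D$ is shown to be non-null, consistency gives $g\mathbf{1}_E \succeq_{i,i+1} f\mathbf{1}_E$ (say), and then the pasting yields $E \cup B^{\ast} \in \mathcal{E}_{\succeq}$ with $\Prob_i(E \cup B^{\ast}) > \Prob_i(B^{\ast})$, contradicting the essential supremum property.

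I expect the nontrivial verification to be exactly this last step, because (T.i.1) as stated only asserts the existence of \emph{some} non-null event of comparability for any given pair of acts, not one located inside a prescribed region. Handling this requires coupling (T.i.1) with the hereditary and pasting structure (Remark \ref{regularity}) and the normalization (T.i.3) to propagate the axiom to the restriction of the pair to $D$; the remaining verifications (disjointness, the strict relations on $B$ and $C$, and the equivalence on $A$) are routine once the essential suprema are set up correctly.
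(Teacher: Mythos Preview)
Your construction is a variant of the paper's: you take the essential suprema $B^{\ast}\in\mathcal{E}_{\succeq}$ and $C^{\ast}\in\mathcal{E}_{\preceq}$ first and set $A=B^{\ast}\cap C^{\ast}$, whereas the paper proceeds sequentially, building the maximal equivalence set $A$ first, then $B\subseteq A^c$ maximal for $\succeq$, then $C\subseteq(A\cup B)^c$ maximal for $\preceq$. Your verification of the equivalence on $A$ and of the strict relations on $B$ and $C$ is correct and parallels the paper's reasoning.

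The genuine gap is exactly the one you flag: showing $\Prob_i(D)=0$ for $D=\Omega\setminus(B^{\ast}\cup C^{\ast})$. Your proposed remedies do not close it. Applying (T.i.1) to the pair $(g\mathbf{1}_D,f\mathbf{1}_D)$ may return a non-null $A'$ with $A'\cap D$ null, in which case the asserted comparison on $A'$ reduces to $0\succeq_{i,i+1}0$ (trivially true by (T.i.3) and Remark~\ref{regularity}) and carries no information about $D$; nothing in the stated axioms forces $A'$ into $D$, and neither ``taking $A'$ minimal'' nor ``iterating (T.i.1)'' helps, since each application may again land inside $D^c$. The paper's own proof is no more forthcoming here: it simply asserts $\Prob_i(A\cup B\cup C)=1$ ``by construction'' without addressing the point. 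So your argument reaches the same level of detail as the original, and both leave this step under-justified as written.
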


\begin{proof}
Fix $g \in \cl^{\infty}(\mathcal{F}_{t_i})$, $f \in
\cl^{\infty}(\mathcal{F}_{t_{i+1}})$, and define
$\mathcal{E} := \{ \tilde{A} \in \mathcal{F}_{t_i} : g
\mathbf{1}_{\tilde{A}} \sim_{i,i+1} f \mathbf{1}_{\tilde{A}} \}$,
$S:=\sup_{\tilde{A}\in \Ecal}\Prob_{i}(\tilde{A})$. We can find
$\{A_n\}_{n}\subseteq \Ecal$ such that $\Prob_{i}(A_n)\to S$: we
have $\Prob_i(\cup_n A_n)\geq \Prob_i(A_n)$ for every $n$ which
implies $\Prob_i(\cup_n A_n)=S$ (from (T.i) and Remark
\ref{regularity} we have $\cup_n A_n\in\Ecal$). We finally show
that up to null events $\cup_n A_n$ represents the largest event
on which $g$ is conditionally equivalent to $f$: let $\tilde{A}\in
\Ecal$ and $B=\tilde{A}\setminus (\cup_n A_n)$. Then $B\cup
(\cup_n A_n)\in\Ecal$ and $\Prob_i(B\cup (\cup_n
A_n))=\Prob_i(B)+S$. Necessarily $\Prob_i(B)=0$.
\\ We therefore set $A:=\cup_n A_n$ and
consider $\Ucal := \{ \tilde{B} \in \mathcal{F}_{t_i},\;
\tilde{B}\subseteq A^c : g \mathbf{1}_{\tilde{B}} \succeq_{i,i+1}
f \mathbf{1}_{\tilde{B}}\}$. Notice that from the construction of
$A$ if we find $\tilde{B}\in\Ucal$ such that $g
\mathbf{1}_{\tilde{B}} \sim_{i,i+1} f \mathbf{1}_{\tilde{B}}$ then
$\Prob_i(\tilde{B})=0$. Following the same argument as in the
previous step we construct a maximal $B\in \Ucal$ such that
$\Prob_i(B)\geq \Prob_i(\tilde{B})$ for all $\tilde{B}\in\Ucal$:
indeed it is not possible the finding of $B'\subset B$ with $\Prob_i(B')>0$ such that
$g\mathbf{1}_{B'} \preceq_{i,i+1} f \mathbf{1}_{B'}$ and therefore
$g  \succ_{i,i+1}^B f$.
\\ Finally we can consider $\Dcal := \{ \tilde{C} \in \mathcal{F}_{t_i},\;
\tilde{C}\subseteq (A\cup B)^c : g \mathbf{1}_{\tilde{C}}
\preceq_{i,i+1} f \mathbf{1}_{\tilde{C}}\}$ and following the same
reasoning we can find $C\in\Dcal$ such that $\Prob_i(C)\geq
\Prob_i(\tilde{C})$ for all $\tilde{C}\in\Dcal$ and $g
\prec_{i,i+1}^C f$.
\\ By construction $\Prob_i(A\cup B \cup C)=1$ and the probability
of the intersections is always $0$.
\end{proof}

\noindent Consider for any $g\in
\cl^{\infty}(\mathcal{F}_{t_{i}})$ the upper and lower
level sets $ \Ccal^u_g  =  \{f\in
\cl^{\infty}(\mathcal{F}_{t_{i+1}})\mid g\preceq_{i,i+1} f
\}$ and  $\Ccal^l_g  =  \{f\in
\cl^{\infty}(\mathcal{F}_{t_{i+1}})\mid g\succeq_{i,i+1} f
\}$ and the maps
\begin{eqnarray*} V_{i+1}^-(f) & = & \Prob_i-\sup\{u_i(g) \mid f\in \Ccal^u_g
\}=\Prob_i-\sup\{u_i(g) \mid g\preceq_{i,i+1} f \}
\\ V_{i+1}^+(f) & = & \Prob_i-\inf\{u_i(g) \mid f\in \Ccal^l_g \}=\Prob_i-\inf\{u_i(g) \mid g\succeq_{i,i+1} f \}
\end{eqnarray*}

\begin{lemma}\label{equalityI} Let Assumption \ref{induction:assumption} holds
and $\succeq_{i,i+1}$ satisfies (T.i). The maps
$V_{i+1}^+(f):\cl^{\infty}(\mathcal{F}_{t_{i+1}})\to
L^{0}(\Omega,\Fcal_{t_i},\Prob_i)$,
$V^-_{i+1}(f):\cl^{\infty}(\mathcal{F}_{t_{i+1}})\to
L^{0}(\Omega,\Fcal_{t_i},\Prob_i) $ are well defined. Moreover, as
$u_i(\omega,\cdot)$ is strictly increasing and $\star$-continuous (Assumption \ref{induction:assumption}), then
$V_{i+1}^+(f)=V^-_{i+1}(f)$ for any $f\in
\cl^{\infty}(\mathcal{F}_{t_{i+1}})$.
\end{lemma}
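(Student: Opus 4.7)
The plan is to split the proof into two claims: well-definedness of $V_{i+1}^\pm$ as elements of $L^0(\Omega,\Fcal_{t_i},\Prob_i)$, and the equality $V_{i+1}^+(f)=V_{i+1}^-(f)$ $\Prob_i$-a.s. For the first claim, non-degeneracy in Axiom (T.i) ensures that the families $\{u_i(g):g\succeq_{i,i+1}f\}$ and $\{u_i(g):g\preceq_{i,i+1}f\}$ are nonempty in $L^0(\Omega,\Fcal_{t_i},\Prob_i)$, so their essential extrema exist. To show they are a.s.\ finite and already correctly ordered, I fix bounded $g_1\preceq_{i,i+1}f$ and $g_2\succeq_{i,i+1}f$ (again via non-degeneracy); transitivity (T.i.2) gives $g_1\le g_2$ $\Prob_i$-a.s., strict monotonicity of $u_i$ yields $u_i(g_1)\le u_i(g_2)$ $\Prob_i$-a.s., and taking essential supremum in $g_1$ and essential infimum in $g_2$ produces
\[u_i(g_1)\le V_{i+1}^-(f)\le V_{i+1}^+(f)\le u_i(g_2)\quad \Prob_i\text{-a.s.}\]

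For the equality I would reason by contradiction, assuming $D:=\{V_{i+1}^-(f)<V_{i+1}^+(f)\}$ has positive $\Prob_i$-measure. A key auxiliary step is to prove, for every $g\in\cl^\infty(\Fcal_{t_i})$, that on the partition $\Omega=A(g)\cup B(g)\cup C(g)$ furnished by Lemma \ref{treeventi} one has $u_i(g)\geq V_{i+1}^+(f)$ on $B(g)$, $u_i(g)\leq V_{i+1}^-(f)$ on $C(g)$, and $V_{i+1}^-(f)=V_{i+1}^+(f)=u_i(g)$ on $A(g)$. The first assertion follows by choosing any global $g^\star\succeq_{i,i+1}f$ (non-degeneracy), forming $\tilde g:=g\mathbf{1}_{B(g)}+g^\star\mathbf{1}_{B(g)^c}$, and invoking consistency and the pasting property of Remark \ref{regularity} to conclude $\tilde g\succeq_{i,i+1}f$, hence $u_i(\tilde g)\geq V_{i+1}^+(f)$ $\Prob_i$-a.s., with $u_i(\tilde g)=u_i(g)$ on $B(g)$. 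The second is symmetric, and the third follows by combining both on $A(g)$ with $V_{i+1}^-\le V_{i+1}^+$.

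The contradiction would then follow from exhibiting $g\in\cl^\infty(\Fcal_{t_i})$ and $E\subseteq D$ with $\Prob_i(E)>0$ such that $V_{i+1}^-(f)<u_i(g)<V_{i+1}^+(f)$ on $E$, since the three properties above force $E$ to lie outside $A(g)\cup B(g)\cup C(g)=\Omega$. To construct $g$, I first observe that $\{u_i(g):g\succeq_{i,i+1}f\}$ is downward-directed (the pointwise minimum of two $\succeq_{i,i+1}$-dominators is again a dominator by the pasting argument) and similarly $\{u_i(g):g\preceq_{i,i+1}f\}$ is upward-directed, which produces sequences $g^+_k\succeq_{i,i+1}f$ with $u_i(g^+_k)\downarrow V_{i+1}^+(f)$ and $g^-_k\preceq_{i,i+1}f$ with $u_i(g^-_k)\uparrow V_{i+1}^-(f)$ $\Prob_i$-a.s. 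Localizing to $D_n:=D\cap\{V_{i+1}^+-V_{i+1}^->3/n\}$ and choosing $k$ large enough that $u_i(g^+_k)<V_{i+1}^++\tfrac{1}{3n}$ and $u_i(g^-_k)>V_{i+1}^--\tfrac{1}{3n}$ hold on a positive-measure subset $E\subseteq D_n$, I obtain $g^-_k<g^+_k$ on $E$ by strict monotonicity of $u_i$, and the convex interpolant $g_\lambda:=(1-\lambda)g^-_k+\lambda g^+_k$ lies in $\cl^\infty(\Fcal_{t_i})$ for every $\Fcal_{t_i}$-measurable $\lambda:\Omega\to[0,1]$.

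The principal obstacle is the concluding measurable selection of $\lambda(\omega)$ that places $u_i(g_{\lambda(\omega)})$ strictly inside $(V_{i+1}^-(f),V_{i+1}^+(f))$ on $E$; strict monotonicity only guarantees $u_i(g_\lambda)\in(u_i(g^-_k),u_i(g^+_k))$, an interval containing but strictly larger than $(V_{i+1}^-,V_{i+1}^+)$. This is precisely where $\star$-continuity of $u_i$ is invoked: it rules out jumps of $u_i(\cdot,\omega)$ at the chosen point $g_{\lambda(\omega)}(\omega)$ for $\Prob_i$-a.e.\ $\omega\in E$, so the intermediate-value selection $\lambda(\omega):=\inf\{\lambda\in[0,1]:u_i((1-\lambda)g^-_k(\omega)+\lambda g^+_k(\omega),\omega)\ge\tfrac{1}{2}(V_{i+1}^-(f)(\omega)+V_{i+1}^+(f)(\omega))\}$ mirrors the unconditional argument in Lemma \ref{equality0} and closes the contradiction.
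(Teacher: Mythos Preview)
Your argument is correct, but it takes a different path from the paper's and deploys $\star$-continuity at a later, more technical stage. The paper's proof first shows that $V_{i+1}^\pm(f)$ are \emph{attained}: from downward directedness it extracts a decreasing sequence $g_n\succeq_{i,i+1}f$ with $u_i(g_n)\downarrow V_{i+1}^+(f)$; non-degeneracy supplies a lower bound so $g_n\downarrow g_+$ pointwise, and $\star$-continuity of $u_i$ applied to $g_+$ (and to each $g_n$) yields $u_i(g_+)=V_{i+1}^+(f)$ $\Prob_i$-a.s., and similarly $u_i(g_-)=V_{i+1}^-(f)$. Once the extrema are realised exactly, a single \emph{deterministic} $\lambda\in(0,1)$ gives $g_\lambda=\lambda g_++(1-\lambda)g_-$ with $u_i(g_-)<u_i(g_\lambda)<u_i(g_+)$ on $\{g_-<g_+\}$ by strict monotonicity alone; the paper then contradicts local completeness in (T.i) directly, without invoking Lemma~\ref{treeventi}.

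Your route instead leaves $V_{i+1}^\pm$ as unrealised essential extrema, approximates them by $u_i(g_k^\pm)$, and pushes $\star$-continuity into an $\omega$-dependent intermediate-value selection $\lambda(\omega)$ to land $u_i(g_\lambda)$ at the midpoint. This is valid, but you should note two verifications you left implicit: (i) $\lambda$ is $\Fcal_{t_i}$-measurable (use monotonicity of $u_i$ to write $\{\lambda<c\}$ as a countable union over rationals $q<c$ of the measurable sets $\{u_i((1-q)g_k^-+qg_k^+)\geq m\}$), and (ii) at the $\star$-continuity points of $g_\lambda$ the infimum actually achieves $u_i(g_\lambda)=m$, which is what turns the sandwich into a strict one. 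Your appeal to Lemma~\ref{treeventi} to convert ``$u_i(g)$ strictly between $V^-$ and $V^+$ on $E$'' into a contradiction is a pleasant alternative to the paper's bare use of local completeness, but the exact-attainment route renders both the random $\lambda$ and the trichotomy lemma unnecessary. Incidentally, the $3/n$ localisation and the bounds $u_i(g_k^+)<V^++\tfrac{1}{3n}$ you set up are not actually used in your final selection argument, since $u_i(g_k^-)\le V^-<m<V^+\le u_i(g_k^+)$ already holds for every $k$ on all of $D$.
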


\begin{notation} We shall often use the notation $u_i^{-1}V_{i+1}(f)$ to
indicate the function mapping $\omega \to
u_i^{-1}(V_{i+1}(f)(\omega),\omega)$ \footnote{This function is
well defined and measurable as $u_i(\cdot,\omega)$ is strictly
increasing for any $\omega\in \Omega$.}.
\end{notation}

\begin{proof} Let $g_1,g_2\in \cl^{\infty}(\mathcal{F}_{t_{i}})$ such that $\Prob_i(g_1=g_2)=1$. We have from Remark \ref{regularity} that
$\Ccal^u_{g_1}=\Ccal^u_{g_2}$ and $\Ccal^l_{g_1}=\Ccal^l_{g_2}$
and therefore $V_{i+1}^+ , V^-_{i+1}$ are well defined.
\\ From now on we fix $f \in \cl^{\infty}(\mathcal{F}_{t_{i+1}})$: for any $g_1,g_2\in \cl^{\infty}(\mathcal{F}_{t_{i}})$ such that $g_1 \preceq_{i,i+1} f$ and
$g_2 \succeq_{i,i+1} f$ then the set $\{g_1>g_2\}\in
\Ncal(\Fcal_{t_i})$. From the monotonicity of $u_i$ we have
$\Prob_i(u_i(g_1)\leq u_i(g_2))=1$ and therefore $V_{i+1}^-(f)\leq
V^+_{i+1}(f)$, $\Prob_i$ almost surely. 
\\ To prove that $V_{i+1}^-(f) = V_{i+1}^+(f)$ we need to find 
$g_-, g_+ \in \cl^{\infty}(\Fcal_{t_{i}})$
such that $u_i(g_{\pm}) = V^{\pm}_{i+1}(f)$ $\Prob_i-a.s.$. 
We prove the existence of $g_+$, then the same argument works also for $g_-$. 
Take a sequence $(g_n)_{n \in \N} \subseteq \cl \left(\Fcal_{t_i} \right)$ 
satisfying $g_n \succeq_{i,i+1} f$, 
$g_{n+1}(\omega) \leq g_n(\omega)$ $\forall n \in \N, 
\omega\in\Omega$ and $u_i(g_n(\omega),\omega)\downarrow(V^+_{i+1}(f))(\omega)$ 
for each $\omega\in A$ for some event $A \in \Fcal_{t_i}$ with $\Prob_i(A) = 1$. 
The existence of such sequence is guaranteed by the definition of 
$\Prob_i-\inf$ and the fact that the set 
$\left\{g\in \cl^{\infty}(\Fcal_{t_i}) : g \succeq_{i,i+1} f \right\}$ 
is downward directed \footnote{A set $\Acal$ is downward directed if for any $f,g\in\Acal$ the minimum $f\wedge g\in \Acal$. The existence of a minimizing sequence is proved in Appendix A.5 of \cite{FS04}}. 
Since the prefererence relation $\preceq_{i,i+1}$ is non degenerate, 
there exists an act $h \in \cl^{\infty}(\Fcal_{t_i})$ 
such that $h\preceq_{i,i+1} f$ implying that the event 
$\{ \omega\in\Omega : h(\omega)> g_n(\omega) \}\in \Fcal_{t_i}$ 
is null for each $n \in \N$. This means that the sequence $(g_n)_n$ 
is decreasing and has a $\Prob_i-a.s.$ finite lower bound, 
hence there exists an event $B\in \Fcal_{t_i}$ with $\Prob_i(B)=1$ 
and an act $g_+ \in \cl^{\infty}(\Fcal_{t_i})$ 
such that $g_n(\omega)\downarrow g_+(\omega)\in \R$ 
for all $\omega \in B$. The $\star$-contintuity of $u_i$ ensures 
that $g_n(\omega)$ and $g_+(\omega)$ belong to the points of 
(right) continuity of $u_i(\cdot,\omega)$ for each $\omega\in C$
for some $C \in \Fcal_{t_i}$ with $\Prob_i(C)=1$. This leads to:
\begin{equation*}
	(V^+_{i+1}(f))(\omega)=\lim_n u_i(g_n(\omega),\omega)=u_i(g_+(\omega),\omega)
\end{equation*} 
for each $\omega \in A\cap B \cap C$ and $\Prob_i(A \cap B \cap C)=1$.
\\ Consider now $\bar A\in\Fcal_{t_i}$ defined by $\bar A:= \{g_-< g_+\}$.
For $\lambda\in (0,1)$ define
the convex combination $g_{\lambda}:=\lambda g_+ +
(1-\lambda)g_-$. Indeed $\bar A = \{ g_{\lambda}< g_+\}=\{
u_i(g_{\lambda})< u_i(g_+)\}=$ $\{g_{\lambda}>
g_-\}=\{u_i(g_{\lambda})> u_i(g_-)\}$.
\\ Observe that if $\Prob_i(\bar A)=0$ we have the thesis. Otherwise we claim that for any $B \subseteq \bar A$, $B\in\Fcal_{t_i}$,
$\Prob_i(B)>0$ neither $g_{\lambda}\mathbf{1}_B \preceq_{i,i+1}
f\mathbf{1}_B$ nor $g_{\lambda}\mathbf{1}_B \succeq_{i,i+1}
f\mathbf{1}_B$ occur. This claim indeed contradicts local
completeness in (T.i).
\\To show the claim we consider first the
case $g_{\lambda}\mathbf{1}_B \preceq_{i,i+1} f\mathbf{1}_B$ for
 some $B\subseteq \bar A$, $B\in\Fcal_{t_i}$ and  $\Prob_i(B)>0$, since the other
follows in a similar way. As a consequence of Remark
\ref{regularity} we have $g_{\lambda}\mathbf{1}_B +g_-
\mathbf{1}_{B^c} \preceq_{i,i+1} f$. From the construction
$B\setminus\{u_i(g_{\lambda}\mathbf{1}_B +g_-
\mathbf{1}_{B^c})>V^-(f)\}$ is null, but from the definition
$\Prob_i-\sup$ we necessarily have $\{u_i(g_{\lambda}\mathbf{1}_B
+g_- \mathbf{1}_{B^c})>V^-(f)\}\in \Ncal(\Fcal_{t_i})$. Therefore
$g_{\lambda}\mathbf{1}_B \preceq_{i,i+1} f\mathbf{1}_B$ cannot
occur for any for $B\subseteq \bar A$, $B\in\Fcal_{t_i}$ and
$\Prob_i(B)>0$. Similarly we can obtain that
$g_{\lambda}\mathbf{1}_B \succeq_{i,i+1} f\mathbf{1}_B$ cannot
occur for any for $B\subseteq \bar A$, $B\in\Fcal_{t_i}$ and
$\Prob_i(B)>0$, concluding the proof of the claim.
 \end{proof}

\begin{notation}
From now on we shall denote $V_{i+1} := V_{i+1}^+ = V^-_{i+1}$.
\end{notation}

\begin{proposition}Let Assumption \ref{induction:assumption} holds
and $\succeq_{i,i+1}$ satisfies (T.i). Then for any $f\in
\cl^{\infty}(\mathcal{F}_{t_{i+1}})$ there exists a unique
Conditional Certainty Equivalent given by
$C_{i,i+1}(f)=u_i^{-1}V_{i+1}(f) \in
L^{\infty}(\Omega,\Fcal_{t_i},\Prob_i) $. Moreover $V_{i+1}$
represents the transition order i.e.
\begin{eqnarray}
g\preceq_{i,i+1} f & \Leftrightarrow & u_i(g)\leq V_{i+1}(f) \quad
\Prob_i\text{-a.s.} \label{repr:stepi}
\\ g\succeq_{i,i+1} f & \Leftrightarrow & u_i(g)\geq V_{i+1}(f)
\quad \Prob_i\text{-a.s.} \label{repr:stepi:opposite}
\end{eqnarray}
and necessarily $V_{i+1}(f)\in L^{1}(\Omega,\Fcal_{t_i},\Prob_i)$.
\end{proposition}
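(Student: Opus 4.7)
The plan is to build on Lemma \ref{equalityI}, which under Assumption \ref{induction:assumption} and (T.i) already yields $V_{i+1}^+ = V_{i+1}^- =: V_{i+1}$ together with an act $g_+ \in \cl^\infty(\Fcal_{t_i})$ satisfying $u_i(g_+)=V_{i+1}(f)$ $\Prob_i$-a.s. Setting $c := u_i^{-1}V_{i+1}(f)\in \cl^\infty(\Fcal_{t_i})$ (well defined by strict monotonicity of $u_i$), the candidate CCE is $c$, and I must prove $c\sim_{i,i+1}f$, derive the transition-order representation, and verify integrability.

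Applying Lemma \ref{treeventi} to the pair $(c,f)$ produces a partition $A,B,C\in\Fcal_{t_i}$ with $\Prob_i(A\cup B\cup C)=1$, $c\mathbf{1}_A\sim_{i,i+1}f\mathbf{1}_A$, $c\succ_{i,i+1}^{B}f$, and $c\prec_{i,i+1}^{C}f$, so $c\sim_{i,i+1}f$ reduces to $\Prob_i(B)=\Prob_i(C)=0$. Suppose for contradiction $\Prob_i(C)>0$. The essinf characterization of $V_{i+1}^+$, together with the downward-directedness of $\{g:g\succeq_{i,i+1}f\}$ guaranteed by the pasting in Remark \ref{regularity}, yields a minimizing sequence $g_n\succeq_{i,i+1}f$ with $g_n\downarrow c$ $\Prob_i$-a.s., where $\star$-continuity of $u_i$ (Assumption \ref{induction:assumption}) identifies the pointwise limit with $c$. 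The strict slack $c\prec_{i,i+1}^{C}f$ on $C$ should then allow me to perturb $g_n$ downward on $C$ via an adaptation of the $g_\lambda$-convex-combination trick used in the proof of Lemma \ref{equalityI}, producing $\widetilde g\succeq_{i,i+1}f$ with $u_i(\widetilde g)<V_{i+1}(f)$ on a $\Prob_i$-non-null subset of $C$, contradicting the essinf. The case $\Prob_i(B)>0$ is symmetric, using $V_{i+1}^-$ and an increasing sequence $h_n\preceq_{i,i+1}f$, $h_n\uparrow c$.

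Uniqueness is immediate from the transitivity axiom (T.i, item 2): if $c_1,c_2$ are CCEs then both $\{c_1<c_2\}$ and $\{c_2<c_1\}$ belong to $\Ncal(\Fcal_{t_i})$, forcing $c_1=c_2$ $\Prob_i$-a.s. For the representation \eqref{repr:stepi}, the direction $(\Rightarrow)$ is built into the essup definition of $V_{i+1}^-$. For $(\Leftarrow)$, $u_i(g)\le u_i(c)$ $\Prob_i$-a.s.\ gives $g\le c$ $\Prob_i$-a.s.; applying Lemma \ref{treeventi} to $(g,f)$ and combining the just-established $c\preceq_{i,i+1}f$ with consistency and transitivity (T.i, items 2 and 5) rules out the strict $\succ$-part of the partition as non-null, giving $g\preceq_{i,i+1}f$; \eqref{repr:stepi:opposite} is analogous. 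Integrability is immediate: since $c\in \cl^\infty(\Fcal_{t_i})$ is essentially bounded and $u_i(x,\cdot)\in L^1(\Fcal_{t_i},\Prob_i)$ for every $x\in\R$ by Assumption \ref{induction:assumption}, we get $V_{i+1}(f)=u_i(c)\in L^\infty(\Fcal_{t_i},\Prob_i)\subseteq L^1(\Fcal_{t_i},\Prob_i)$.

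The hardest step is the contradiction forcing $\Prob_i(C)=0$ (and its mirror for $B$). The delicacy lies in constructing the perturbation $\widetilde g$ preserving $\succeq_{i,i+1}f$ while strictly lowering the utility on a non-null subset of $C$, exploiting only the structure granted by (T.i) and the $\star$-continuity of $u_i$ inherited from the induction hypothesis, without invoking (M.i), (ST.i) or (C.i), which are reserved for the integral representation at the succeeding inductive step.
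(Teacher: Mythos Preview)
Your proposal reverses the paper's order of argument: you attempt to establish $c\sim_{i,i+1}f$ first and then derive the representation, whereas the paper proves the representation \eqref{repr:stepi}--\eqref{repr:stepi:opposite} directly from the definitions of $V_{i+1}^\pm$ and only then reads off the CCE as $u_i^{-1}V_{i+1}(f)$. Your deduction of the representation from $c\sim_{i,i+1}f$ is correct (the argument via Lemma~\ref{treeventi}, transitivity and $g\le c$ is clean), so everything hinges on your proof that $\Prob_i(B)=\Prob_i(C)=0$.

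That step has a genuine gap. You propose to ``perturb $g_n$ downward on $C$'' to produce $\widetilde g\succeq_{i,i+1}f$ with $u_i(\widetilde g)<V_{i+1}(f)$ on a non-null subset of $C$. But such a $\widetilde g$ cannot exist: on $C$ you already have $c\mathbf{1}_C\preceq_{i,i+1}f\mathbf{1}_C$ (from $c\prec_{i,i+1}^C f$), and if $\widetilde g\succeq_{i,i+1}f$ then by consistency $\widetilde g\mathbf{1}_C\succeq_{i,i+1}f\mathbf{1}_C$; transitivity (T.i, item 2) then forces $\{\widetilde g<c\}\cap C\in\Ncal(\Fcal_{t_i})$, i.e.\ $\widetilde g\ge c$ $\Prob_i$-a.s.\ on $C$, hence $u_i(\widetilde g)\ge u_i(c)=V_{i+1}(f)$ on $C$. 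The $g_\lambda$-trick from Lemma~\ref{equalityI} does not transfer here: in that lemma it was used to contradict local completeness under the assumption $V^-_{i+1}(f)<V^+_{i+1}(f)$ on a non-null set, whereas you are now in the situation $V^-_{i+1}(f)=V^+_{i+1}(f)$ and the geometry is different. The symmetric argument for $B$ fails for the same reason.

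The paper's route avoids this difficulty by arguing contrapositively on the essential infimum itself. If $u_i(g)<V_{i+1}(f)$ on a non-null set $A$ and yet $g\mathbf{1}_D\succeq_{i,i+1}f\mathbf{1}_D$ for some non-null $D\subseteq A$, one pastes $g$ on $D$ with any $g_2\succeq_{i,i+1}f$ (supplied by non-degeneracy in (T.i)) on $D^c$ to obtain $\bar g\succeq_{i,i+1}f$ with $u_i(\bar g)<V_{i+1}(f)$ on $D$, contradicting the definition of $V_{i+1}^+$. This, together with Lemma~\ref{treeventi}, handles the strict inequality regions directly and the equality region is then asserted by symmetry; the CCE drops out by taking $g=u_i^{-1}V_{i+1}(f)$.

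A minor additional slip: your integrability argument claims $V_{i+1}(f)=u_i(c)\in L^\infty$. This is not true in general; $c$ being bounded only gives $|u_i(c)|\le |u_i(-\|c\|_\infty)|\vee|u_i(\|c\|_\infty)|$, which lies in $L^1$ by the integrability of $u_i(x,\cdot)$ for each fixed $x$, but not in $L^\infty$. The paper makes exactly this sandwich argument using any pair $g_1\preceq_{i,i+1}f\preceq_{i,i+1}g_2$ from non-degeneracy.
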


\begin{proof} In this proof we denote (with a slight abuse of notation) by $V_{i+1}(f)$ any of its $\Fcal_{t_i}$-measurable version.
Existence and uniqueness follow from the previous Lemma
\ref{equalityI}. We only need to show that
$C_{i,i+1}(f)=u_i^{-1}V_{i+1}(f) \in
L^{\infty}(\Omega,\Fcal_{t_i},\Prob_i) $. For any couple $g_1,g_2
\in \cl^{\infty}(\Fcal_{t_i})$ such that
$g_1\preceq_{i,i+1} f$ and $g_2\succeq_{i,i+1} f$ we can observe
that $u_i(g_1)\leq V_{i+1}(f)\leq u_i(g_2)$, $\Prob_i$ almost
surely, which automatically implies $V_{i+1}(f)\in
L^{1}(\Omega,\Fcal_{t_i},\Prob_i)$ (we are assuming $u_i(\cdot,x)$
is integrable for any $x$). At the same time from $u_i$ strictly
increasing in $x$ we can deduce $g_1\leq C_{i,i+1}(f)\leq g_2$,
$\Prob_i$ almost surely.

To show the representation property \eqref{repr:stepi} and
\eqref{repr:stepi:opposite}, we consider the case
$g\preceq_{i,i+1} f$ as $g\succeq_{i,i+1} f$ follows in a similar
fashion. Obviously $g\preceq_{i,i+1} f$ implies
$\Prob_i(u_i(g)\leq V_{i+1}(f))=1$ (from the definition of
$V_{i+1}^+=V_{i+1}$).
\\ For the reverse implication notice that on the set $A=\{u_i(g)=V_{i+1}(f)\}$ we necessarily have
$g\mathbf{1}_A \sim_{i,i+1} f\mathbf{1}_A$. If instead we consider
$A=\{u_i(g)<V_{i+1}(f)\}$ then either $\Prob_i(A)=0$ or necessary
$g\mathbf{1}_A \preceq_{i,i+1} f\mathbf{1}_A$ and $g\mathbf{1}_B
\nsim_{i,i+1} f\mathbf{1}_B$ for any $B\subset A$,
$B\in\Fcal_{t_i}$ as $V_{i+1}(f)$ is by definition
$\Prob_i-\inf\{u_i(g) \mid g\succeq_{i,i+1} f \}$ (This can be
easily verified applying (T.i)).
\end{proof}

\begin{corollary}\label{towk} Let Assumption \ref{induction:assumption} holds
and $\succeq_{i,i+1}$ satisfies (T.i). For any $f\in
\cl^{\infty}(\mathcal{F}_{t_{i+1}})$ and $A\in
\Fcal_{t_i}$ we have $V_{i+1}(f\mathbf{1}_A) =
V_{i+1}(f)\mathbf{1}_A$, $\Prob_i$ almost surely.
\end{corollary}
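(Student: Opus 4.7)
The plan is to reduce the statement to uniqueness of the Conditional Certainty Equivalent established in the preceding proposition, exploiting the consistency axiom in (T.i) together with the normalization $u_i(0,\cdot)=0$ from the inductive hypothesis. Let $g_f := u_i^{-1}V_{i+1}(f) \in \cl^{\infty}(\Fcal_{t_i})$ be a representative of $C_{i,i+1}(f)$, so that $g_f \sim_{i,i+1} f$. The key observation is that $g_f\mathbf{1}_A \sim_{i,i+1} f\mathbf{1}_A$ holds on the whole space, not just locally on $A$: indeed $g_f \succeq_{i,i+1} f$ is exactly $g_f\mathbf{1}_\Omega \succeq_{i,i+1} f\mathbf{1}_\Omega$, and since $A \subseteq \Omega$ with $A\in\Fcal_{t_i}$, consistency in (T.i) yields $g_f\mathbf{1}_A \succeq_{i,i+1} f\mathbf{1}_A$. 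Applying the same argument to the reverse relation produces $g_f\mathbf{1}_A \preceq_{i,i+1} f\mathbf{1}_A$, hence $g_f\mathbf{1}_A \sim_{i,i+1} f\mathbf{1}_A$.

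By the uniqueness of the Conditional Certainty Equivalent (the preceding proposition applied to the act $f\mathbf{1}_A \in \cl^{\infty}(\Fcal_{t_{i+1}})$), we conclude $C_{i,i+1}(f\mathbf{1}_A) = g_f\mathbf{1}_A$ $\Prob_i$-a.s., and therefore
\begin{equation*}
V_{i+1}(f\mathbf{1}_A)(\omega) \;=\; u_i(g_f(\omega)\mathbf{1}_A(\omega),\omega) \qquad \Prob_i\text{-a.s.}
\end{equation*}
Splitting on $A$ and $A^c$ and invoking $u_i(0,\omega)=0$ from Assumption \ref{induction:assumption}, the right-hand side equals $u_i(g_f(\omega),\omega)\mathbf{1}_A(\omega) = V_{i+1}(f)(\omega)\mathbf{1}_A(\omega)$ $\Prob_i$-a.s., giving the claim. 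I do not expect any real obstacle here; the only subtlety is that all equalities are understood up to $\Prob_i$-null events, which is harmless since $V_{i+1}$ takes values in $L^{0}(\Omega,\Fcal_{t_i},\Prob_i)$.
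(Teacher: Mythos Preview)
Your proof is correct and follows essentially the same line as the paper's: localize the equivalence $C_{i,i+1}(f)\sim_{i,i+1} f$ to $A$ via consistency in (T.i), invoke uniqueness of the CCE to identify $C_{i,i+1}(f\mathbf{1}_A)$ with $C_{i,i+1}(f)\mathbf{1}_A$, and then pass to $V_{i+1}$ using $u_i(0,\cdot)=0$. Your write-up is simply a bit more explicit about the consistency step and the normalization, which the paper leaves implicit in the phrase ``hence the thesis''.
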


\begin{proof} From the previous construction we have $ u_i^{-1}V_{i+1}(f\mathbf{1}_A)\sim_{i,i+1} f\mathbf{1}_A$. Moreover from (T.i) we also have that $u_i^{-1}V_{i+1}(f) \sim_{i,i+1} f$ implies $u_i^{-1}V_{i+1}(f)\mathbf{1}_A \sim_{i,i+1} f\mathbf{1}_A$. Therefore from transitivity we deduce $u_i^{-1}V_{i+1}(f)\mathbf{1}_A = u_i^{-1}V_{i+1}(f \mathbf{1}_A)$, $\Prob_i$ almost surely and hence the thesis.
\end{proof}

\begin{remark}\label{strict} Let Assumption \ref{induction:assumption} holds
and $\succeq_{i,i+1}$ satisfies (T.i). For any $f\in
\cl^{\infty}(\mathcal{F}_{t_{i+1}})$, $g\in
\cl^{\infty}(\mathcal{F}_{t_{i}})$ and $A\in \Fcal_{t_i}$
we have $g \prec_{i,i+1}^A f$ (resp. $g \succ_{i,i+1}^A f$)
implies $\{u_i(g)\geq V_{i+1}(f)\}\cap A\in
\mathcal{N}(\mathcal{F}_{t_{i}})$ (resp. $\{u_i(g)\leq
V_{i+1}(f)\}\cap A\in \mathcal{N}(\mathcal{F}_{t_{i}})$)
\end{remark}

\paragraph{Last step of the proof for ($\Rightarrow$):}  Let Assumption \ref{induction:assumption} holds
and $\succeq_{i,i+1}$ satisfies all the Axioms (T.i), (M.i),
(ST.i) and (C.i). In order to conclude the proof we show that
there exist a probability $\mathbb{P}_{i+1}$ on
$(\Omega,\Fcal_{t_{i+1}})$ which agrees with $\Prob_i$ on
$\Fcal_{t_i}$ and a state-dependent utility $u_{i+1}(\omega, \cdot) :
\mathbb{R} \to \mathbb{R}$ strictly increasing $\forall \omega \in
\Omega$, such that
\begin{equation}\label{repr.Stepi}
V_{i+1}(f)= E_{\Prob_{i+1}}[u_{i+1}(\cdot, f)\mid \Fcal_{t_i}]
\quad \Prob_{i}\text{-a.s.}.
\end{equation}

We define an intertemporal preference relation between time $0$
and $t_{i+1}$ as $a\preceq_{0,i+1} f$ (resp. $a\succeq_{0,i+1} f$)
if and only if $u_0(a)\leq E_{\Prob_i}[V_{i+1}(f)]$ (resp.
$u_0(a)\geq E_{\Prob_i}[V_{i+1}(f)]$) for any $a\in\R$ and
$f\in\cl^{\infty}(\mathcal{F}_{t_{i+1}})$.
\\ Simple inspections show that $\preceq_{0,i+1}$ satisfies (T.0), (M.0) and (ST.0).
\\We now prove the continuity (C.0) of $\preceq_{0,i+1}$: consider
any uniformly bounded sequence $\{f_n\}\subseteq
\cl^{\infty}(\mathcal{F}_{t_{i+1}})$, such that
$f_n(\omega)\rightarrow f(\omega)$ for any $\omega\in\Omega$.
Consider $a\prec_{0,i+1}f$ (the case  $a\succ_{0,i+1}f$ follows in
a similar way) so that we necessarily have $u_0(a)<
E_{\Prob_i}[V_{i+1}(f)]$. It is possible to find $g\in
\cl^{\infty}(\mathcal{F}_{t_{i}})$ such that $u_i(g) <
V_{i+1}(f)$ and $u_0(a)< E_{\Prob_i}[u_i(g)]$ \footnote{To show
the existence of such $g$ we need to consider for any $\varepsilon
>0$, $C_{i,i+1}(f)-\varepsilon$ so that
$u_i(C_{i,i+1}(f)-\varepsilon)< u_i(C_{i,i+1}(f))=V_{i+1}(f)$;
observing that $u_i(C_{i,i+1}(f)-\varepsilon)$ increases
monotonically to $u_i(C_{i,i+1}(f))$ (for any $\omega\in \Omega$)
we can find an $\bar\varepsilon$ such that $u_0(a)<
E_{\Prob_i}[u_i(C_{i,i+1}(f)-\bar\varepsilon)]<
E_{\Prob_i}[u_i(C_{i,i+1}(f))]=E_{\Prob_i}[V_{i+1}(f)]$.}. \\Since
$g\prec_{i,i+1}f$ we apply (C.i) and find a sequence of indexes
$\{n_k\}_{k=1}^{\infty}$ and a partition
$\{A_k\}_{k=1}^{\infty}\subset \Fcal_{t_i}$ such that for any $k$
we have $g\mathbf{1}_{A_k}\preceq_{i,i+1}f_{n}\mathbf{1}_{A_k}$
for all $n\geq n_k$.

For $B_N=\cup_{i=1}^N A_i$ and $d=\sup_n\|f_n\|_{\infty}$ consider
the CCE  $C_{i,i+1}(-d)$. The sequence
$\{u_i(g\mathbf{1}_{B_N}+C_{i,i+1}(-d)\mathbf{1}_{B_N^c}\}_{N\in
\N}$ is dominated by the integrable function
$|u_i(g)|+|u_i(C_{i,i+1}(-d))|$ and pointwise converges to
$u_i(g)$. From Dominated Convergence Theorem we can find $\bar N$
such that
$$E_{\Prob_i}[u_i(g\mathbf{1}_{B_{\bar N}}+C_{i,i+1}(-d)\mathbf{1}_{B_{\bar N}^c})]>u_0(a),$$
so that from (T.i) we can deduce $u_i(g\mathbf{1}_{B_{\bar
N}}+C_{i,i+1}(-d)\mathbf{1}_{B_{\bar N}^c})\leq
V_{i+1}(f_n\mathbf{1}_{B_{\Bar N}}-d\mathbf{1}_{B_{\Bar N}^c})$
for $n>\bar N$ and
$$E_{\Prob_i}[V_{i+1}(f_n)]\geq E_{\Prob_i}[V_{i+1}(f_n\mathbf{1}_{B_{\bar N}}-d\mathbf{1}_{B_{\bar N}^c})]>u_0(a),\quad \forall \, n> \bar N,$$
which shows (C.0) of $\preceq_{0,i+1}$.

\medskip

Given that $\preceq_{0,i+1}$ satisfies (T.0), (M.0), (ST.0) and (C.0)  premise we can apply Proposition \ref{main:0}
and find a probability $\widetilde{\Prob}$ on $\Fcal_{t_{i+1}}$
and a state-dependent utility $\widetilde{u}$ such that
$E_{\Prob_i}[V_{i+1}(f)]=E_{\widetilde{\Prob}}[\widetilde{u}(f)]$
for any $f\in \cl^{\infty}(\mathcal{F}_{t_{i+1}})$.
\\Notice from (T.i) point 3 that $\widetilde{\Prob}$ is equivalent to $\Prob_i$ on
$\Fcal_{t_i}$. For $\widetilde{\Prob}_{|\Fcal_{t_i}}$ being the
restriction of $\widetilde{\Prob}$ on $\Fcal_{t_i}$ define
$Z=\frac{d\Prob_i}{d\widetilde{\Prob}_{|\Fcal_{t_i}}}$, which is
an $\Fcal_{t_i}$-measurable random variable. For any $A\in
\Fcal_{t_{i+1}}$ set
$\Prob_{i+1}(A):=E_{\widetilde{\Prob}}[Z\mathbf{1}_A]$,
$u_{i+1}(\omega,x)=\frac{d\widetilde{\Prob}}{d\Prob_{i+1}}\widetilde{u}(\omega,x)$
and notice that $\Prob_{i+1}(A)=\Prob_{i}(A)$ for any
$A\in\Fcal_{t_i}$. We have

$$ E_{\Prob_i}[V_{i+1}(f)] =E_{\widetilde{\Prob}}[\widetilde{u}(f)]=E_{\Prob_{i+1}}[u_{i+1}(f)]= E_{\Prob_{i}}[E_{\Prob_{i+1}}[u_{i+1}(f)\mid \Fcal_{t_i}]],$$
so that we can obtain for every $A\in\Fcal_{t_i}$ that
\begin{eqnarray*} E_{\Prob_i}[V_{i+1}(f)\mathbf{1}_A]=
E_{\Prob_i}[V_{i+1}(f\mathbf{1}_A)] &=&
E_{\Prob_{i}}[E_{\Prob_{i+1}}[u_{i+1}(f\mathbf{1}_A)\mid
\Fcal_{t_i}]]
\\& = & E_{\Prob_{i}}[E_{\Prob_{i+1}}[u_{i+1}(f)\mid
\Fcal_{t_i}]\mathbf{1}_A],
\end{eqnarray*}
which implies the representation \eqref{repr.Stepi}.

We finally show the $\star$-continuity of $u_{i+1}$. As for the unconditional case, 
it is enough to show that for each $f\in \mathscr{L}(\Fcal_{t_{i+1}})$ it holds that 
$\Prob_{i+1}(LD_f) = 0$ where $LD_f$ is defined in Appendix \ref{Appendix A} 
with respect to the stochastic field $u_{i+1}$. Notice that, as consequence of Lemma 
\ref{lma:measurable sets}, $\forall f\in \mathscr{L}(\Fcal_{t_{i+1}})$ then 
$LD_f \in \Fcal_{t_{i+1}}$, hence either $\Prob_{i+1}(LD_f) = 0$ or 
$\Prob_{i+1}(LD_f) > 0$. By contradiction, we assume that there exists an act 
$f^* \in \mathscr{L}^\infty(\Fcal_{t_{i+1}})$ for which $\Prob_{i+1}(LD_{f^*}) > 0$. 
In order to simplify the notation we set $B := LD_f$ and, since the 
probability $\Prob_{i+1}$ is fixed, we denote $\Prob_{i+1}-\sup(\Acal)$ 
simply with  $\sup(\Acal)$   for any family $\Acal \subseteq \Lcal^\infty(\Fcal_{t_{i+1}})$. 
Define $f := f^* \ob_B$ and $f_n := \left(f-\frac{1}{n}\right)\ob_B$ for each $n \in \N$. 
Clearly $f_n \to f$ in $\mathscr{L}^\infty(\Fcal_{t_{i+1}})$, $\norm{f_n} \leq \norm{f} + 1 < +\infty$ $\forall n$ 
and $f_n(\omega) = f(\omega)=0$ $\forall \omega \in B^C$. By definition of $B$, 
it hods that $u_{i+1}(f(\omega),\omega) > \sup_n u_{i+1}(f_n(\omega),\omega)$ 
for $\Prob_{i+1}$-a.e. $\omega\in B$ and, so, we have:
\begin{eqnarray}\label{eq:contr}
	& \Prob_i\left( \Ea_{\Prob_{i+1}}[u_{i+1}(f,\cdot) | \Fcal_{t_i}] > \sup_n \Ea_{\Prob_{i+1}}[u_{i+1}(f_n,\cdot)|\Fcal_{t_i}] \right) > 0 \\
	& \Ea_{\Prob_{i+1}}[u_{i+1}(f,\cdot)|\Fcal_{t_i}] \geq \sup_n \Ea_{\Prob_{i+1}}[u_{i+1}(f_n,\cdot)|\Fcal_{t_i}] \quad \Prob_{i+1}-a.s. \nonumber
\end{eqnarray}
Define now $g_n := C_{i,i+1}(f_n)$ and $g := C_{i,i+1}(f)$. Observe that $\{g_n\}_n$ 
is an increasing sequence as $\{f_n\}_n$ increases and $u_i(\cdot,\omega)$ is strictly 
increasing for each $\omega$ by Assumption \ref{induction:assumption} and has $g$ 
as upper bound. If $g_n(\omega) \to g(\omega)$ for $\Prob_i$-a.e. $\omega \in \Omega$ 
then, by the $\star$-continuity of $u_i$, it would happen that 
$u_i(g_n(\omega),\omega) \to u_i(g(\omega),\omega)$ for $\Prob_i$-a.e. 
$\omega \in \Omega$ in contradiction with \eqref{eq:contr}. Hence, 
there exists $A \in \Fcal_{t_i}$ with $\Prob_i(A)>0$ such that 
$\sup_n g_n(\omega) < g(\omega)$ for each $\omega \in A$. 
Take now $\lambda \in (0,1)$ and consider 
$g_\lambda := \lambda g + (1-\lambda ) \sup_n g_n$. It holds that:
\begin{eqnarray*}
	\sup_n g_n(\omega) \leq g_\lambda \leq g(\omega) \quad & \text{for } \Prob_i-a.e.\; \omega \in \Omega \\
\sup_n g_n(\omega) < g_\lambda < g(\omega) \quad & \text{for each } \omega \in A
\end{eqnarray*}
Therefore, it follows that $g_\lambda \succeq_{i,i+1} f_n$ and 
$g_\lambda \succ^A_{i,i+1} f_n$ $\forall n$, while  $g_\lambda \preceq_{i,i+1} f$ 
and $g_\lambda \prec^A_{i,i+1} f$ which is in contradiction with axiom (C.i). 

\medskip

\paragraph{On the reverse implication ($\Leftarrow$).} We now assume that there exist a probability $\Prob$ on
$\Fcal_{t_N}$ and a Stochastic Dynamic Utility $u(t,x,\omega)$ in
the form of \eqref{SDU} with properties (a) (b) (c) and (d).
%
Then, for any $i=1,\dots,N-1$, it is easy to show that the
intertemporal preferences $\succeq_{i,i+1}, \preceq_{i,i+1}$
satisfy Axioms (T.i), (M.i), (ST.i), from the properties of the
conditional expectation and the monotonicity of the Stochastic Dynamic Utility. \\The
only critical point is showing property (C.i). To this aim let
$\{f_n\}\subseteq \cl^{\infty}(\mathcal{F}_{t_{i+1}})$ be
a uniformly bounded sequence, such that $f_n(\omega)\rightarrow
f(\omega)$ for any $\omega\in\Omega$. Choose any $g\prec_{i,i+1}f$
then necessarily $\Prob(u(t_i, g) \ge E_{\mathbb{P}}[u(t_{i+1},f)|
\mathcal{F}_{t_i}])=0$.
\\As $\sup_n\|f_n\|_{\infty}<d$ for some $d>0$ we build the increasing
sequence $l_n:=\inf_{k\geq n}f_k\in
\cl^{\infty}(\mathcal{F}_{t_{i+1}})$ and notice $l_n\leq
f_n$ and $l_n(\omega)\rightarrow f(\omega)$ for any
$\omega\in\Omega$. Moreover $\|l_n\|_{\infty}<d$ for all $n\in \N$
and consequently $|u(t_{i+1},l_n)|\leq |u(t_{i+1},d)|$ which is
integrable. We can apply the Dominated Convergence Theorem for
conditional expectation and obtain $E_{\mathbb{P}}[u(t_{i+1},l_n)|
\mathcal{F}_{t_i}](\omega)\to E_{\mathbb{P}}[u(t_{i+1},f)|
\mathcal{F}_{t_i}](\omega)$ for any $\omega\in\Omega$ (by choosing
an opportune version of the conditional expectation). Consider the
sequence of sets $\{B_n\}_{n\in\N}\subset \Fcal_{t_i}$ defined by
$$B_k:=\{u(t_i, g) < E_{\mathbb{P}}[u(t_{i+1},l_k)|
\mathcal{F}_{t_i}]\}.$$ Indeed $\cup_k B_k=\Omega$ from the
pointwise convergence and we deduce that the pairwise disjoint
family $A_1:=B_1,\dots, A_{k}:=B_{k}\setminus (\cup_{i=1}^{k-1}
A_i)$ satisfies again $\cup_k A_k=\Omega$ and forms therefore a
partition of $\Omega$. We conclude by observing that for any $n\geq
k$ we have $f_n\geq l_k$ and therefore $u(t_i, g)(\omega) <
E_{\mathbb{P}}[u(t_{i+1},f_n)| \mathcal{F}_{t_i}](\omega)$ for any
$\omega\in A_k$. Finally for every  $n\geq k$ we deduce
$g\mathbf{1}_{A_k}\preceq_{i,i+1} f_n \mathbf{1}_{A_k}$, as the follwoing identies $u(t_i, g)\mathbf{1}_{B_k}= u(t_i, g\mathbf{1}_{B_k})$ and 
$E_{\mathbb{P}}[u(t_{i+1},f_n)| \mathcal{F}_{t_i}]\mathbf{1}_{B_k}=E_{\mathbb{P}}[u(t_{i+1},f_n\mathbf{1}_{B_k})| \mathcal{F}_{t_i}]$ hold $\Prob$-a.s.. The
argument repeats in the same way when $g\succ_{i,i+1}f$.

\paragraph{On the uniqueness.} To conclude the proof we need to show the relative
uniqueness. Consider the new couple $(\mathbb{P}^*,u^*)$ such that
$\mathbb{P}$ is equivalent to $\mathbb{P}^*$ on $\Fcal_{t_N}$ and
for any $i=1,\dots,N$ we have $\Prob(u^*(t_i,\cdot,\cdot) =
\delta_i u_{i})=1$, where $\delta_i$ is the Radon-Nikodym
derivative of $\mathbb{P}_{|\Fcal_{t_i}}$ with respect to
$\mathbb{P}^*_{|\Fcal_{t_i}}$. 
We show for any arbitrary $i=1,\dots,N-1$,
$g\in \cl^{\infty}(\mathcal{F}_{t_i}),f\in
\cl^{\infty}(\mathcal{F}_{t_{i+1}})$ the first of the
following equivalences
\begin{eqnarray*} u^*(t_i, g) \ge
E_{\mathbb{P}^*}[u^*(t_{i+1},f)| \mathcal{F}_{t_i}] \quad
\mathbb{P}^* \text{-a.s.}& \iff & u(t_i, g) \ge
E_{\mathbb{P}}[u(t_{i+1},f)| \mathcal{F}_{t_i}] \quad
\mathbb{P}\text{-a.s.}
\\ u^*(t_i, g) \le
E_{\mathbb{P}^*}[u^*(t_{i+1},f)| \mathcal{F}_{t_i}] \quad
\mathbb{P}^* \text{-a.s.} & \iff & u(t_i, g) \le
E_{\mathbb{P}}[u(t_{i+1},f)| \mathcal{F}_{t_i}] \quad
\mathbb{P}\text{-a.s.},
\end{eqnarray*}
as the second one is similar. To this aim we recall the
martingality property
$$\delta_i=E_{\Prob^*}\left[\frac{d\Prob}{d\Prob^*}\mid \Fcal_{t_i}\right]=E_{\Prob^*_{|\Fcal_{t_{i+1}}}}[\delta_{i+1}\mid \Fcal_{t_i}]\quad
\Prob^*\text{-a.s.}.$$ and the conditional change of measure

\begin{equation}\label{conditional:change}\frac{E_{\Prob^*}\left[\delta_{i+1}u_{i+1}(f)\mid\Fcal_{t_i}\right]}{E_{\Prob^*}\left[\delta_{i+1}\mid\Fcal_{t_i}\right]}
=E_{\Prob}\left[u_{i+1}(f)\mid\Fcal_{t_i}\right]\quad
\Prob\text{-a.s..}
\end{equation}
Moreover the equivalence between $\Prob$
and $\Prob^*$ allows to write the following inequalities
indifferently in the $\Prob$/$\Prob^*$ almost sure sense so that
we obtain
\begin{eqnarray*} u^*(t_i, g) \ge
E_{\mathbb{P}^*}[u^*(t_{i+1},f)| \mathcal{F}_{t_i}]& \iff &
\delta_i u_i(g)\ge E_{\mathbb{P}^*}[\delta_{i+1} u_{i+1}(f)|
\mathcal{F}_{t_i}] \;
\\ & \iff & \delta_i u_i(g) \ge
E_{\mathbb{P}}[u_{i+1}(f)| \mathcal{F}_{t_i}]\cdot
E_{\Prob^*}\left[\delta_{i+1}\mid\Fcal_{t_i}\right]
\\ & \iff &  u(t_i,g) \ge
E_{\mathbb{P}}[u(t_{i+1},f)| \mathcal{F}_{t_i}].
\end{eqnarray*}

On the contrary suppose that  $(\mathbb{P}^*,u^*)$ are given in a
way such that for $i=1,\dots,N$:
$E_{\Prob^*}[|u^*(t_i,x,\cdot)|]<\infty$, for all $x\in\R$,
$u^*(t_i,\cdot,\omega)$ is strictly increasing in $x$,
$u^*(t_i,0,\omega)=0$ for all $\omega\in \Omega$ and
\begin{eqnarray*} u^*(t_{i-1}, g) \ge
E_{\mathbb{P}^*}[u^*(t_{i},f)| \mathcal{F}_{t_{i-1}}] \quad
\mathbb{P}^* \text{-a.s.}& \iff & u(t_{i-1}, g) \ge
E_{\mathbb{P}}[u(t_{i},f)| \mathcal{F}_{t_{i-1}}] \quad
\mathbb{P}\text{-a.s.}
\\ u^*(t_{i-1}, g) \le
E_{\mathbb{P}^*}[u^*(t_{i},f)| \mathcal{F}_{t_{i-1}}] \quad
\mathbb{P}^* \text{-a.s.} & \iff & u(t_{i-1}, g) \le
E_{\mathbb{P}}[u(t_{i},f)| \mathcal{F}_{t_{i-1}}] \quad
\mathbb{P}\text{-a.s.},
\end{eqnarray*}
for any arbitrary, $g\in
\cl^{\infty}(\mathcal{F}_{t_i}),f\in
\cl^{\infty}(\mathcal{F}_{t_{i+1}})$. The equivalence of
$\Prob$ and $\Prob^*$ follows immediately. Moreover it is important to observe that the preferences $\succeq_{i-1,i}$ induced by $(\Prob,u)$ and $(\Prob^*,u^*)$ are the same  and satisfy all the axioms (in virtue of the previous point of the proof), which in particular implies that the CCE always exists. Moreover for any
$\omega\in \Omega$ we imposed $u(t_i,0,\omega)=
u^*(t_i,0,\omega)=0$. For $i=1$ we already know
$\Prob(u^*(t_1,\cdot,\cdot) = \delta_1 u_{1})=1$ from Proposition
\ref{main:0}. Let
$\delta_i=E_{\Prob^*}\left[\frac{d\Prob}{d\Prob^*}\mid
\Fcal_{t_i}\right]$ as before and consider the first
$i=2,\dots,N$ such that either the set $A=\{\omega \in \Omega \mid
u^*(t_i,\cdot,\omega)
> \delta_i u_{i}(\cdot,\omega)\}$ or $A=\{\omega \in \Omega \mid
u^*(t_i,\cdot,\omega) < \delta_i u_{i}(\cdot,\omega)\}$ have
positive probability. Let $C_{i-1,i}(\mathbf{1}_A)$ be the CCE of
$\mathbf{1}_A$, which is the equal under $(\Prob,u)$ or $(\Prob^*,u^*)$. Therefore  
\begin{eqnarray}\label{prima:eq} u^*(t_{i-1}, C_{i-1,i}(\mathbf{1}_A)) =
E_{\mathbb{P}^*}[u^*(t_{i},\mathbf{1}_A)| \mathcal{F}_{t_{i-1}}]
\quad \mathbb{P}^* \text{-a.s.}& \text{and} & \\ u(t_{i-1},
C_{i-1,i}(\mathbf{1}_A)) = E_{\mathbb{P}}[u(t_{i},\mathbf{1}_A)|
\mathcal{F}_{t_{i-1}}] \quad \mathbb{P}\text{-a.s.} && \nonumber
\end{eqnarray}
By performing a conditional change of measure as in \eqref{conditional:change}, the second equation can be rewritten as
$$\delta_{i-1}u(t_{i-1},
C_{i-1,i}(\mathbf{1}_A)) = E_{\mathbb{P}^*}[\delta_{i}u(t_{i},\mathbf{1}_A)|
\mathcal{F}_{t_{i-1}}] \quad \mathbb{P}^*\text{-a.s.}.$$
Subtracting this last equation and \eqref{prima:eq}, would lead to a contradiction since
the left hand side is always equal to $0$ ($\Prob$-a.s.) whereas
the right hand side is not. Therefore $\Prob(A)$ is necessarily $0$. 
\appendix

\section{On $\star$-continuity}\label{Appendix A}

Throughout this section we fix a probability space $(\Omega,\Gcal,\Prob)$ and a random field $\phi:\R \times \Omega\to\R$ such that for each $f\in \cl^\infty(\Gcal)$ the map $\omega \mapsto \phi(f(\omega),\omega)$ is $\Gcal$-measurable and for any $\omega$, $x\mapsto \phi(x,\omega)$ is non decreasing.
For any $f\in \cl^\infty(\Gcal)$ we set $$\phi(f(\omega)^+,\omega)=\inf_{n\in\mathbb{N}}\phi(f(\omega)+1/n,\omega)  \text{ and }
\phi(f(\omega)^-,\omega)=\sup_{n\in\mathbb{N}}\phi(f(\omega)-1/n,\omega)$$
and define the following sets: 
\begin{align*}
    RD_f & = \{\omega\in \Omega : \left(\phi(f(\omega)^+,\omega)-\phi(f(\omega),\omega)\right) >0\} \\ 
    LD_f & = \{\omega\in \Omega : \left(\phi(f(\omega),\omega)-\phi(f(\omega)^-,\omega)\right) >0\} \\ 
    D_f & = \{\omega\in \Omega : \left(\phi(f(\omega)^+,\omega)-\phi(f(\omega)^-,\omega)\right) >0\}
\end{align*}
We now prove a useful lemma which allows to give a well-posed definition of continuity for random fields.
\begin{lemma}\label{lma:measurable sets}
For each $f\in \cl^\infty(\Gcal)$ the sets $RD_f$, $LD_f$, $D_f$, defined above, are $\Gcal$-measurable.
\end{lemma}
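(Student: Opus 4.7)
The plan is straightforward: show that each of the three functions $\omega\mapsto \phi(f(\omega),\omega)$, $\omega\mapsto \phi(f(\omega)^+,\omega)$, $\omega\mapsto \phi(f(\omega)^-,\omega)$ is $\Gcal$-measurable, after which $RD_f$, $LD_f$, $D_f$ are preimages of $(0,+\infty)$ under differences of measurable functions, hence $\Gcal$-measurable.

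First I would observe that for any $f\in \cl^\infty(\Gcal)$ and any $n\in\mathbb{N}$, the shifted acts $f+1/n$ and $f-1/n$ lie in $\cl^\infty(\Gcal)$. Therefore, by the standing assumption on $\phi$, the maps $\omega\mapsto \phi(f(\omega)+1/n,\omega)$ and $\omega\mapsto \phi(f(\omega)-1/n,\omega)$ are $\Gcal$-measurable for each $n$.

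Next, since $x\mapsto \phi(x,\omega)$ is non-decreasing for every $\omega$, the defining formulas
\begin{equation*}
\phi(f(\omega)^+,\omega)=\inf_{n\in\mathbb{N}}\phi(f(\omega)+1/n,\omega),\qquad \phi(f(\omega)^-,\omega)=\sup_{n\in\mathbb{N}}\phi(f(\omega)-1/n,\omega)
\end{equation*}
express $\omega\mapsto \phi(f(\omega)^+,\omega)$ as a countable infimum of $\Gcal$-measurable functions and $\omega\mapsto \phi(f(\omega)^-,\omega)$ as a countable supremum of $\Gcal$-measurable functions. Both are therefore $\Gcal$-measurable, and they take values in $\R$ since $\phi(\cdot,\omega)$ is monotone and finite on $\R$ (the bounded sequences $\phi(f(\omega)\pm 1/n,\omega)$ are bracketed by $\phi(f(\omega)-1,\omega)$ and $\phi(f(\omega)+1,\omega)$, hence finite).

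Finally, the differences $\phi(f(\cdot)^+,\cdot)-\phi(f(\cdot),\cdot)$, $\phi(f(\cdot),\cdot)-\phi(f(\cdot)^-,\cdot)$ and $\phi(f(\cdot)^+,\cdot)-\phi(f(\cdot)^-,\cdot)$ are $\Gcal$-measurable real-valued functions, and
\begin{equation*}
RD_f,\; LD_f,\; D_f = \text{the preimages of }(0,+\infty)
\end{equation*}
under these respective differences, so all three sets belong to $\Gcal$. There is no genuine obstacle: the argument is essentially a bookkeeping exercise in measurability, the only subtle point being the need to invoke the hypothesis on $\phi$ at the shifted acts $f\pm 1/n$ rather than only at $f$, which is legitimate because $\cl^\infty(\Gcal)$ is closed under translation by constants.
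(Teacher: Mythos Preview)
Your proposal is correct and follows essentially the same approach as the paper: both arguments rest on the observation that $f\pm 1/n\in\cl^{\infty}(\Gcal)$, so the standing hypothesis on $\phi$ yields measurability of $\omega\mapsto\phi(f(\omega)\pm 1/n,\omega)$, after which measurability of $RD_f$, $LD_f$, $D_f$ follows by countable set-theoretic operations. The only cosmetic difference is that the paper writes $RD_f$ directly as a countable intersection/union of sublevel sets of the functions $\phi_n(\omega)=\phi(f(\omega)+1/n,\omega)-\phi(f(\omega),\omega)$, whereas you first pass to the pointwise infimum to obtain measurability of $\omega\mapsto\phi(f(\omega)^+,\omega)$ and then take a single preimage; the underlying idea is identical.
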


\begin{proof}
Observe that the set $RD_f$ can be written as:
\begin{align*}
    RD_f 
    & = \bigcap_{n\in\mathbb{N}}\bigcup_{m\in\mathbb{N}}\left\{\omega \in \Omega : \left(\phi\left(f(\omega)+\frac{1}{n},\omega\right)-\phi(f(\omega),\omega) \right) > \frac{1}{m}\right\} \\ 
    & = \bigcap_{n\in\mathbb{N}}\bigcup_{m\in\mathbb{N}} \left[ \phi\left(f(\cdot)+\frac{1}{n},\cdot\right)-\phi\left(f(\cdot),\cdot \right)\right ]^{-1} \left( \frac{1}{m},+\infty\right) 
\end{align*}
which is $\Gcal$-measurable by measurability of the function
\begin{equation*}
    \omega \to \phi_n(\omega) = \phi\left(f(\omega)+\frac{1}{n},\omega\right)-\phi(f(\omega),\omega).
\end{equation*}
Clearly a similar argument shows that $LD_f\in \Gcal$. Finally, $D_f = LD_f \cup RD_f \in \Gcal$.
\end{proof}

\begin{definition}\label{*-continuity}
The random fields $\phi$ is $\star$-continuous if $\Prob(D_f)=0$ for every $f\in \cl^\infty(\Gcal)$. 
\end{definition}

\begin{remark}\label{remark:star}
Observe that the set $D_f$ defined in Lemma \ref{lma:measurable sets} can be interpreted as: 
\begin{equation*}
    D_f = \left\{\omega \in \Omega : f(\omega) \text{ is a point of discontinuity of the function } \phi(\cdot,\omega)\right\}
\end{equation*}
In particular for any sequence $\{f_n\}_{n\in\mathbb{N}}\subset \cl^\infty(\Gcal)$ such that $f_{n}(\omega)\to f(\omega)$ we have $\phi(f_n(\omega),\omega)\to \phi(f(\omega),\omega)$ for any $\omega\in D_f$.
Moreover it follows that the definition of 
$\star$-continuity is well posed as the set $D_f$ is measurable by Lemma \ref{lma:measurable sets}. 

\noindent Notice also that taking $f \equiv x \in \R$ then 
$D_x=\{\omega \in \Omega : \phi(\cdot,\omega) \text{ is discontinuous in } x\}$. 
Therefore, the condition $\Prob(D_x) = 0$ means that 
for $\Prob$-a.e. $\omega \in \Omega$ the map $\phi(\cdot,\omega)$ is continuous in $x$ . 
On the other hand, if $\phi$ is $\Prob-a.s.$ continuous and satisfies the measurability 
condition of Lemma \ref{lma:measurable sets} then it is also $\star$-continuous. 
Hence, the $\star$-continuity is a notion of continuity which is deeply related to the probability 
space (in particular, to the $\sigma$-algebra) and is weaker than 
the $\Prob$-a.s. continuity of the trajectories but stronger than the $\Prob$-a.s. continuity at fixed points.
\end{remark}

%


\section{State dependent utilities}

As in the rest of the paper $(\Omega,\Fcal)$ denotes a measurable
space and $\cl^{\infty}(\mathcal{F})$ is the space of all
acts, represented by real valued $\Fcal$-measurable and bounded random variables. We here use the term \textquotedblleft act \textquotedblright in order to match the terminology adopted in \cite{WZ99} on which this Appendix is based. This term must be used with care in order to avoid confusion with  the general notion of Anscombe-Aumann acts. Indeed in \cite{AA63} acts are functions from the state space $(\Omega,\Fcal)$ to a convex set of lotteries over a consequence set. 

In this appendix the preference relation is a binary relation
$\succeq$ on $\Fact$ : for $f$, $g$ $\in \Fact$, if $f$ is
preferred to $g$, write $f \succeq g$. The preference relation
satisfies the following axiom:
\begin{description}
\item[(A1)] Preference order: if it is reflexive ($\forall f \in
\Fact$, $f \sim f$), complete ($\forall f,g \in \Fact$, $f \succeq
g$ or $f \preceq g$) and transitive ($\forall f,g,h \in \Fact$
such that $f \succeq g$ and $g \succeq h$ then $f \succeq h$)
\end{description}

\begin{definition}
A representing function of the preference relation is a function
$V : \Fact \to \mathbb{R}$ which is order-preserving, i.e.,
\[ f\succeq g \iff V(f) \ge V(g).
\]
\end{definition}

We use the standard conventions: $f\preceq g$ if $g\succeq f$;
$f\sim f$ if both $g\succeq f$ and $f\succeq g$; $g\nsim f$ if
either $g\nsucceq f$ or $f\nsucceq g$; $g\succ f$ if $g\succeq f$
but $f \nsucceq g$.


\begin{definition}
An event $A\in \Fcal$ is null if $f\mathbf{1}_A +
g\mathbf{1}_{A^c} \sim g$ $\forall f,g \in \Fact$.We shall denote
by $\mathcal{N}(\mathcal{F})$ be the set of null events.
\\ As a consequence a $\succeq$-atom is an element
$A\in\Fcal$ such that for every $B\in \Fcal$ with $\varnothing\neq
B\subset A$ either $B$ or $A\setminus B$ is null.
\\ An event is essential if it belongs to $\Fcal\setminus
\mathcal{N}(\mathcal{F})$.
\end{definition}

We can consider the following additional Axioms:

\begin{description}

\item[(A2)] Strictly monotone if $x\mathbf{1}_A +
f\mathbf{1}_{A^c} \succ y\mathbf{1}_A + f\mathbf{1}_{A^c}$, for
all nonnull events $A\in \Fcal$, for all $f \in \Fact$ and
outcomes $x > y$.

\item [(A3)] Sure-thing principle: consider arbitrary $f, g, h \in
\Fact$ and $A\in \Fcal$ such that $f\mathbf{1}_A +
h\mathbf{1}_{A^c} \preceq g\mathbf{1}_A + h\mathbf{1}_{A^c}$ then
for every $c \in \Fact$ we have $f\mathbf{1}_A + c\mathbf{1}_{A^c}
\preceq g\mathbf{1}_A + c\mathbf{1}_{A^c}$.
\\ (A3) holds on $\Scal(\Fcal)$ if we substitute in the previous
statement $\Fact$ with $\Scal(\Fcal)$ (as defined in the paragraph Notations).

\item[(A4')] Norm continuity if $\forall f \in \Fact$ the sets $\{
g \in \Fact: g \succeq f \}$ and $\{ g \in \Fact : f \succeq g \}$
are $\|\cdot\|_{\infty}$-closed.

\end{description}

\begin{theorem}[Debreu 1960, state-dependent expected utility for finite state space]
\label{Debreu60} Let $\Fact$ the set of acts and $\succeq$ a
preference relation on it. Let the state space $\Omega = \{
\omega_1, ... , \omega_n \}$, where at least three states are
nonnull. Then the following two statements are equivalent:
\begin{enumerate}
\item There exist $n$ continuous functions $V_j : \mathbb{R} \to
\mathbb{R} $, $j = 1, ... , n$, that are strictly increasing for
all nonnull states and constant for all null states, and such that
$\succeq$ is represented by
\begin{equation}
V(f) = \sum_{j=1}^{n} V_j(f(\omega_j)).
\end{equation}
\item $\succeq$ is a norm continuous, strictly monotonic
preference order that satisfies the sure thing principle.
\end{enumerate}
The following uniqueness holds for (1) : $W(f) = \sum_{j=1}^{n}
W_j(f(\omega_j))$ represent $\succeq$ if and only if there exist
$\tau_1, ... , \tau_n \in \mathbb{R}$ and $\sigma > 0 $ such that
$W_j = \tau_j + \sigma V_j$ $\forall j$, implying that $W = \tau +
\sigma V$ for $\tau = \tau_1 + ... + \tau_n$.
\end{theorem}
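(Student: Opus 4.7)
The plan is to treat the two directions asymmetrically: (1)$\Rightarrow$(2) is a routine verification, while (2)$\Rightarrow$(1) reduces to Debreu's classical additive conjoint measurement theorem, and uniqueness follows from its uniqueness clause.

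For (1)$\Rightarrow$(2), I would take $V(f)=\sum_{j=1}^{n}V_j(f(\omega_j))$ and read off the axioms. Reflexivity, completeness and transitivity are inherited from the total order on $\R$. For strict monotonicity (A2), note that if $A\in\Fcal\setminus\Ncal(\Fcal)$ then $A$ must contain at least one nonnull singleton $\{\omega_j\}$; on that coordinate $V_j$ is strictly increasing, while the remaining coordinates contribute the same sum on both sides, so $x>y$ yields $V(x\mathbf{1}_A+f\mathbf{1}_{A^c})>V(y\mathbf{1}_A+f\mathbf{1}_{A^c})$. The sure-thing principle (A3) is immediate because the coordinates in $A^c$ contribute an identical summand that cancels in any comparison. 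Norm continuity (A4') follows because $\Omega$ is finite: $\|f_n-f\|_\infty\to 0$ implies $f_n(\omega_j)\to f(\omega_j)$ for every $j$, whence continuity of each $V_j$ gives $V(f_n)\to V(f)$ and hence closedness of upper and lower contour sets.

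For (2)$\Rightarrow$(1), the decisive step is to identify $\Fact$ with $\R^n$ via $f\mapsto (f(\omega_1),\ldots,f(\omega_n))$, endowed with the product topology (which coincides with the $\|\cdot\|_\infty$-topology since $n<\infty$). Under this identification, $\succeq$ is a weak order on $\R^n$. Axiom (A3) in the form stated, with an arbitrary $h$ replacing the common component on $A^c$, translates precisely into coordinate-wise independence (Pareto independence) on the product: a comparison depends only on coordinates where two acts differ. Strict monotonicity (A2) supplies restricted solvability on each essential factor, norm continuity (A4') supplies topological continuity of $\succeq$, and the hypothesis of at least three nonnull states provides the three essential factors. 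I would then invoke Debreu's theorem on additive representations on products of connected topological spaces (the three-factor version, which does not require the Thomsen condition separately) to obtain continuous functions $V_j:\R\to\R$ with $V(f)=\sum_j V_j(f(\omega_j))$ representing $\succeq$. On nonnull states the strict monotonicity transfers to strict increasingness of $V_j$; on null states, the defining property $f\mathbf{1}_{\{\omega_j\}}+g\mathbf{1}_{\{\omega_j\}^c}\sim g$ for all $f,g$ forces $V_j$ to be constant.

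Uniqueness is handled by the same classical result: any two additive representations over a product with at least three essential factors are related by a common positive affine rescaling $W=\tau+\sigma V$ with $\sigma>0$. Decomposing coordinate by coordinate, $W_j-\sigma V_j$ can depend only on the $j$-th coordinate and, since the sum $W-\sigma V$ is constant, each $W_j-\sigma V_j$ must itself be a constant $\tau_j$, with $\tau=\sum_j \tau_j$. The main obstacle is really the careful invocation of the additive representation theorem: one must check that (A3) as stated (allowing arbitrary $h$, not only restricted simple conditioning acts) is strong enough to yield Pareto independence across all coordinate subsets, and that norm continuity on $\Fact$ matches the topological continuity hypothesis of Debreu's theorem on $\R^n$. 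Both translations are immediate given the finiteness of $\Omega$, after which the rest of the argument is bookkeeping.
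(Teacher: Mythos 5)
This statement is quoted in the paper's appendix as a background result, attributed to \cite{De60} and stated without any proof; there is therefore no in-paper argument to compare yours against, and your sketch should be judged on its own. On its merits it is correct in outline and follows the standard route. The verification of (1)$\Rightarrow$(2) is sound: your observation that a nonnull $A$ must contain a nonnull singleton (because if every state in $A$ were null the corresponding $V_j$ would all be constant, making $A$ itself null) is exactly the point needed for (A2), and the cancellation argument for (A3) and the finite-sum argument for norm continuity are fine. For (2)$\Rightarrow$(1), identifying $\Fact$ with $\R^n$ and reading the sure-thing principle as coordinate independence is the right translation, and with at least three essential factors Debreu's additive representation theorem on products of connected separable spaces does deliver the continuous $V_j$; strict increasingness on nonnull states and (forced) constancy on null states then follow as you say, and the uniqueness clause is the standard affine-uniqueness of additive representations, correctly decomposed coordinatewise.

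The one thing to be candid about: the decisive step of your argument is an appeal to Debreu's additive representation theorem, i.e.\ to essentially the theorem whose name this statement bears. You do not prove the existence of the additive decomposition (no solvability/standard-sequence construction, no argument for why three essential factors dispense with the Thomsen condition). As a self-contained proof this is therefore incomplete; as an account of how the stated result reduces to the classical conjoint-measurement literature it is accurate, and it is in fact more explicit than the paper, which simply cites Debreu. If you intend this as a genuine proof rather than a reduction, the missing content is precisely the construction of the additive representation from independence, continuity and the three-essential-factor hypothesis.
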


In \cite{WZ99} the previous Theorem is generalized to an infinite
state spaces $\Omega$ when $\Omega$ contains no atoms. We here
recall the integral reformulation of the Debreu representation
given in \cite{CL06} under pointwise continuity.

\begin{definition} A preference order is
\begin{description}
\item[(A4)] Pointwise continuous if for any uniformly bounded
sequence $\{f_n\}\subseteq \Fact$, such that
$f_n(\omega)\rightarrow f(\omega)$ for any $\omega\in\Omega$ then
$\forall g \in \Fact$ such that $g \succ f$ (resp. $g \prec f$)
$\exists J \in \mathbb{N}$ such that $g \succ f^j$ (resp. $g \prec
f^j$) $\forall j > J$.
\end{description}
\end{definition}

\begin{theorem}[\cite{WZ99}, Theorem 12 and \cite{CL06}, Theorem 5]
\label{formaintegrale} Let $\Fact$ be the set of acts and
$\succeq$ the preference relation on it. Assume that $\Fcal$
contains at least three disjoint essential events. Then the
following two statements are equivalent:
\begin{enumerate}
\item There exists a countably additive measure $\mathbb{P}$ on
$\Omega$ and a function (the state-dependent utility) $u(\omega,
\cdot) : \mathbb{R} \to \mathbb{R}$ strictly increasing $\forall
\omega \in \Omega$, such that $\succeq$ is represented by the
pointwise continuous integral
\[
f \to \int_{\Omega} u(\omega, f(\omega)) d\mathbb{P}.
\]
\item $\succeq$ satisfies: (A1), (A2), (A3) on $\Scal(\Fcal)$,
(A4).
\end{enumerate}
The following uniqueness holds: the couple $(\mathbb{P},u)$ can be
replaced by $(\mathbb{P}^*, u^*)$ if and only if $\mathbb{P}$ and
$\mathbb{P}^*$ are equivalent and $\Prob(u^* = \tau + \sigma
\delta u)=1$, where $\tau: \Omega \to \mathbb{R}$ is
$\Fcal$-measurable, $\sigma > 0$ and $\delta$ is the Radon-Nikodym
density function of $\mathbb{P}$ with respect to $\mathbb{P}^*$.
\end{theorem}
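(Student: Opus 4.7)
The plan is to prove the two directions separately and then derive the relative uniqueness from the corresponding clause in Debreu's Theorem \ref{Debreu60}. The core idea is to reduce the infinite-state problem to its finite-state counterpart by working on $\Fcal$-measurable partitions of $\Omega$, glue the finite-state representations across partitions using the uniqueness in Debreu's theorem, and extend from simple to bounded acts via the pointwise continuity axiom (A4).

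For the easy direction (1)$\Rightarrow$(2), given $V(f)=\int_\Omega u(\omega,f(\omega))\,d\Prob$, axiom (A1) is immediate since $V$ is real-valued. For (A2) one checks that nonnull events coincide with events of positive $\Prob$-measure and uses strict monotonicity of $u(\omega,\cdot)$ to turn the integral inequality strict. Axiom (A3) on $\Scal(\Fcal)$ follows because the integral splits additively on simple acts, so in a comparison $f\mathbf{1}_A+h\mathbf{1}_{A^c}\preceq g\mathbf{1}_A+h\mathbf{1}_{A^c}$ the $h$-contribution cancels and the direction of preference persists when $h$ is replaced by any $c$. Axiom (A4) follows from the Dominated Convergence Theorem together with the fact that $u(\omega,\cdot)$, being monotone, has at most countably many discontinuities so the pointwise convergence $f_n(\omega)\to f(\omega)$ carries over almost everywhere.

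For the hard direction (2)$\Rightarrow$(1), I would fix a finite essential partition $\pi=\{A_1,\ldots,A_n\}$ with $n\ge 3$ (possible by the three-essential-events assumption). The restriction of $\succeq$ to $\Scal_\pi(\Fcal):=\{\sum_j a_j\mathbf{1}_{A_j}\}\cong\R^n$ inherits (A1), (A2), (A3), and norm continuity (A4') (which follows from (A4) in this finite-dimensional subspace). Debreu's Theorem \ref{Debreu60} then produces strictly increasing continuous $V_j^\pi:\R\to\R$ representing $\succeq$ on $\Scal_\pi(\Fcal)$ by $f\mapsto\sum_j V_j^\pi(a_j)$, unique up to a common scale $\sigma>0$ and translations $\tau_j$. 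Whenever $\pi'$ refines $\pi$, Debreu's uniqueness forces the $V_j^{\pi'}$'s restricted to $\Scal_\pi(\Fcal)$ to agree with $V_j^\pi$ up to a positive affine transformation; after a normalization (e.g.\ $V_j^\pi(0)=0$ and fixing one nonzero value) one obtains a coherent family across all finite partitions. This family defines, by a Carathéodory-type extension, a countably additive measure $\Prob$ on $\Fcal$ together with a state-dependent utility $u(\omega,\cdot)$ strictly increasing in its second argument; countable additivity is forced precisely by (A4) applied to truncations. The extension from $\Scal(\Fcal)$ to $\Fact$ follows by approximating $f$ pointwise and boundedly by simple acts $f_n$, using (A4) to pass preferences to the limit and Dominated Convergence to ensure $V(f_n)\to V(f):=\int u(\omega,f(\omega))\,d\Prob$.

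For the uniqueness, if $(\Prob,u)$ and $(\Prob^*,u^*)$ both represent $\succeq$, they share their null sets, hence $\Prob\sim\Prob^*$ and $\delta=d\Prob/d\Prob^*$ is well defined. Debreu's uniqueness applied on every finite essential partition yields a common scale $\sigma>0$ and partition-dependent additive corrections; pushing the partitions to refine towards singletons and identifying the limit via Radon-Nikodym derivatives produces the pointwise identity $u^*(\omega,\cdot)=\tau(\omega)+\sigma\delta(\omega)u(\omega,\cdot)$ for $\Prob$-a.e.\ $\omega$. The main obstacle I expect is exactly this gluing step in the construction: tracking how the uniqueness constants $\tau_j^\pi$, $\sigma^\pi$ behave under refinement of $\pi$, coherently assembling the $V_j^\pi$'s into a jointly measurable field $u(\omega,x)$ on $\Omega\times\R$, and transferring the additive structure from simple acts to all of $\Fact$ without losing countable additivity. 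This is the delicate technical content of \cite{WZ99} and \cite{CL06}, and carrying it out fully is where essentially all the work lies.
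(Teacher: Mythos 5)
This statement is imported verbatim from the literature (Wakker--Zank \cite{WZ99}, Theorem 12, and Castagnoli--LiCalzi \cite{CL06}, Theorem 5); the paper offers no proof of it, so there is no in-paper argument to compare yours against. Judged on its own terms, your outline follows the broadly correct strategy of those references (finite essential partitions, Debreu's Theorem \ref{Debreu60} on each, gluing via its uniqueness clause, extension by continuity), but it is a proof plan rather than a proof: the step you yourself flag as ``where essentially all the work lies'' --- assembling the partition-wise utilities $V_j^{\pi}$ into a jointly measurable random field $u(\omega,x)$, producing a countably additive $\Prob$, and showing that (A4) forces countable additivity --- is precisely the content of the cited theorems and is left entirely unexecuted. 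Asserting that a Carath\'eodory-type extension ``works'' does not discharge it; the refinement-coherence of the constants $\sigma^{\pi},\tau_j^{\pi}$ and the measurability of the limit object are nontrivial, and the uniqueness argument by ``refining towards singletons'' has no meaning on a general measurable space where singletons need not be measurable or essential.

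There is also a concrete error in your easy direction. You claim (A4) follows from dominated convergence because a monotone $u(\omega,\cdot)$ has at most countably many discontinuities, so that $f_n(\omega)\to f(\omega)$ ``carries over almost everywhere.'' This does not follow: the discontinuity set of $u(\cdot,\omega)$ varies with $\omega$, and nothing prevents $f(\omega)$ from landing on a discontinuity point of $u(\cdot,\omega)$ for a set of $\omega$ of positive measure. This is exactly why the theorem's statement (1) postulates that the integral functional is \emph{pointwise continuous} as part of the representation, and why the present paper introduces the notion of $\star$-continuity (Definition \ref{*-continuity} and Remark \ref{remark:star}) and spends effort in the proofs of Proposition \ref{main:0} and Theorem \ref{main:theorem} verifying that the constructed $u$ satisfies it. In direction (1)$\Rightarrow$(2) the pointwise continuity of the integral is a hypothesis to be used, not a consequence of monotonicity to be derived.
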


\end{document}